\newtheorem{theorem}{Theorem}
\theoremstyle{plain}
\newtheorem{lemma}{Lemma}
\newtheorem{proposition}{Proposition}
\newtheorem{remark}{Remark}
\numberwithin{equation}{section}
\subjclass[2010]{35J61; 35Q92; 35J75; 35J91}
\keywords{Neumann boundary conditions, Gierer-Meinhardt system, sub-supersolutions, topological degree theory}
\begin{document}
\title{Multiple solutions to Gierer-Meinhardt systems of elliptic equations}
\author[A. Moussaoui]{Abdelkrim Moussaoui}
\address{Applied Mathematics Laboratory, Faculty of Exact Sciences, \newline
and Biology Department, Faculty of Natural and Life Sciences \newline
A. Mira Bejaia University, 
Targa Ouzemour, 06000 Bejaia, Algeria}
\email{abdelkrim.moussaoui@univ-bejaia.dz}

\begin{abstract}
We establish the existence of multiple solutions for Gierer-Meinhardt system
involving Neumann boundary conditions. The approach combines the methods of
sub-supersolution and Leray-Schauder topological degree.
\end{abstract}

\maketitle

\section{Introduction}

The Gierer-Meinhardt model \cite{GMsyst} proposed in 1972 is a typical model
of reaction-diffusion systems which has been the object of extensive
mathematical treatment in recent years, see \cite{Ni, Wei} for a description
of progress made and references. It describes the activator-inhibitor
coupled behavior for many systems in cell biology and physiology \cite%
{GMsyst, K, Mein}. An activator is a biochemical which stimulates a change
in cells or tissues so that cell differentiation or cell division occurs at
the position where the activator concentration is high. An inhibitor is a
chemical which diffuses much faster than the activator and tempers the
self-enhancing growth of the activator concentration, thereby stabilizing
the system.

The general model proposed by Gierer-Meinhardt \cite{GMsyst} may be written
as%
\begin{equation}
\left\{ 
\begin{array}{l}
u_{t}=d_{1}\Delta u-\hat{d}_{1}u+c\rho \frac{u^{\alpha _{1}}}{v^{\beta _{1}}}%
+\rho _{0}\rho \text{ in }\Omega \times \left[ 0,T\right] \\ 
v_{t}=d_{2}\Delta v-\hat{d}_{2}v+c^{\prime }\rho ^{\prime }\frac{u^{\alpha
_{2}}}{v^{\beta _{2}}}\text{ \ \ \ \ \ \ \ in }\Omega \times \left[ 0,T%
\right] ,%
\end{array}%
\right.
\end{equation}%
where $\Omega $ is bounded domain of $%
\mathbb{R}
^{N}$ ($N\geq 1$) with smooth boundary $\partial \Omega $, under homogeneous
Neumann boundary conditions%
\begin{equation*}
\frac{\partial u}{\partial \eta }=\frac{\partial v}{\partial \eta }=0\text{
\ on}\;\partial \Omega ,
\end{equation*}%
in which $\eta $ denotes the unit outer normal to $\partial \Omega $. The
constants $d_{1},d_{2}$ are diffusion coefficients with $d_{1}\ll d_{2}$ and 
$\hat{d}_{1},\hat{d}_{2},c,c^{\prime }$ and $\rho _{0}$ are positive. The
functions $u(t,x)$ and $v(t,x)$ are the concentrations of an activator
substance and an inhibitor substance, respectively, while the exponents $%
\alpha _{i},\beta _{i}\geq 0$ satisfy the relation $\beta _{1}\alpha
_{2}>\left( \alpha _{1}-1\right) \left( \beta _{2}+1\right) $.

In particular, it has been a matter of high interest to study nonconstant
positive steady states called ground states of Gierer-Meinhardt system. They
are the solutions of the elliptic system%
\begin{equation}
\left\{ 
\begin{array}{ll}
d_{1}\Delta u-\hat{d}_{1}u+\frac{u^{\alpha _{1}}}{v^{\beta _{1}}}=0 & \text{%
in\ }\Omega , \\ 
d_{2}\Delta v-\hat{d}_{2}v+\frac{u^{\alpha _{2}}}{v^{\beta _{2}}}=0 & \text{%
in\ }\Omega ,%
\end{array}%
\right.   \label{4}
\end{equation}%
subject to Neumann boundary conditions. The difficulty in dealing with
problem (\ref{4}) is mainly due to the lack of variational structure and a
priori estimates on the solutions. An idea due to Keener \cite{K} consists
to consider the shadow system associated to (\ref{4}), which is obtained by
dividing by $d_{2}$ in the second equation and then letting $%
d_{2}\rightarrow +\infty $. It has been shown that nonconstant solutions of
the shadow system exhibit interior or boundary spikes. Among the large
number of works in this direction we quote for instance \cite{GG, GWW,
Ni-Takagi, Ni-Takagi2, W1}. The case of finite $d_{2}$ and bounded domain
can be found in \cite{NI-Takagi-Yanagida}, \cite{WW}-\cite{Wei2}, where
existence, stability and/or dynamics of spike solutions are studied.

In whole space $\Omega =%
\mathbb{R}
^{N}$, existence and uniqueness of solutions for a class of Gierer-Meinhardt
system (\ref{4}) are shown in \cite{MKT} for $N\geq 3$. In dimension one or
two ($N=1$,$2$), it has been shown that system (\ref{4}) exhibits single or
multiple bump solutions, see \cite{DKC, DKW} and the references therein.
When the spatial dimension $N=3$, a smoke-ring nonradially solution as well
as a symmetric radially bound solution are constructed for system (\ref{4})
in \cite{KR} and \cite{KWY}, respectively.

The Dirichlet boundary conditions in (\ref{4}) has received a special
attention where existence, nonexistence and uniqueness results have been
shown. Relevant contributions regarding this topic can be found in \cite{CM,
GR, Kim} and the references given there.

Surprisingly enough, so far we were not able to find previous results
providing more than one solution for (\ref{4}), whether in the Dirichlet or
Neumann boundary conditions. Motivated by this fact, our main concern is the
question of existence of multiple ground states for Gierer-Meinhardt system%
\begin{equation*}
(\mathrm{P})\qquad \left\{ 
\begin{array}{ll}
\Delta u-u+\frac{u^{\alpha _{1}}}{v^{\beta _{1}}}=0 & \text{in}\;\Omega , \\ 
\Delta v-v+\frac{u^{\alpha _{2}}}{v^{\beta _{2}}}=0 & \text{in}\;\Omega , \\ 
\frac{\partial u}{\partial \eta }=\frac{\partial v}{\partial \eta }=0 & 
\text{on}\;\partial \Omega ,%
\end{array}%
\right. 
\end{equation*}%
where the exponents $\alpha _{i},\beta _{i}>0$ satisfy%
\begin{equation}
0\leq \alpha _{i}-\beta _{i}<\alpha _{i}+\beta _{i}<1.  \label{alphabeta}
\end{equation}%
It is worth noting that no loss of generality is involved by taking
parameters $d_{i},\hat{d}_{i}$ in (\ref{4}) identically equal to $1$.

By a (weak) solution of system $(\mathrm{P})$ we mean a couple $(u,v)\in
H^{1}(\Omega )\times H^{1}(\Omega )$ such that 
\begin{equation}
\left\{ 
\begin{array}{l}
\int_{\Omega }\nabla u\nabla \varphi \,\mathrm{d}x+\int_{\Omega }u\varphi \,%
\mathrm{d}x=\int_{\Omega }\frac{u^{\alpha _{1}}}{v^{\beta _{1}}}\varphi \,%
\mathrm{d}x, \\ 
\int_{\Omega }\nabla v\nabla \psi \,\mathrm{d}x+\int_{\Omega }v\psi \,%
\mathrm{d}x=\int_{\Omega }\frac{u^{\alpha _{2}}}{v^{\beta _{2}}}\psi \,%
\mathrm{d}x,%
\end{array}%
\right.  \label{defsol}
\end{equation}%
for all $(\varphi ,\psi )\in H^{1}(\Omega )\times H^{1}(\Omega ),$ provided
the integrals in the right-hand side of the above identities exist.

Our main result is stated as follows.

\begin{theorem}
\label{T5}Under assumption (\ref{alphabeta}), problem $(\mathrm{P})$ has at
least three positive solutions in $C^{1,\tau }(\overline{\Omega })\times
C^{1,\tau }(\overline{\Omega }),$ for certain $\tau \in (0,1)$, where at
least one vanishes on $\partial \Omega $ and one is positive on $\partial
\Omega $.
\end{theorem}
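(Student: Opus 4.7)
The strategy, as signaled in the abstract, combines the sub-supersolution method with Leray-Schauder topological degree. My plan is to produce three solutions of $(\mathrm{P})$ by first localizing two of them in disjoint ordered intervals generated by two distinct sub-supersolution pairs — one pair designed so that its trapped solution has a component vanishing on $\partial\Omega$, the other so that both components remain strictly bounded below by a positive constant — and then extracting a third solution from a degree computation on a large ball that strictly contains both intervals.

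For the ``interior'' solution I would take the constant sub-supersolution pair $(\underline{u}_2, \underline{v}_2) = (c, c) \le (C, C) = (\bar{u}_2, \bar{v}_2)$ with $0 < c < C$ chosen so that $c \le c^{\alpha_i - \beta_i}$ and $C \ge C^{\alpha_i - \beta_i}$; these algebraic conditions are simultaneously solvable because $\alpha_i - \beta_i < 1$ in (\ref{alphabeta}). The existence result for sub-supersolution pairs (presumably proved earlier in the paper) then supplies a solution $(u_2, v_2)$ bounded below by $c > 0$, hence strictly positive on $\partial\Omega$. For the ``boundary-vanishing'' solution, a more delicate choice is required: one natural attempt is to let $\underline{u}_1$ be a small positive multiple of the first Dirichlet eigenfunction of $-\Delta + I$ (so that $\underline{u}_1$ vanishes on $\partial\Omega$), paired with a small constant $\underline{v}_1$ and supersolutions engineered so that the trapped solution $u_1$ must itself touch zero on the boundary. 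The sublinearity $\alpha_i + \beta_i < 1$ again allows direct verification of the sub-supersolution inequalities. Parameters are tuned so that the two ordered intervals $\mathcal I_1, \mathcal I_2$ are disjoint in at least one component, yielding two geometrically distinct solutions.

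For the third solution I would reformulate $(\mathrm P)$ as a compact fixed-point problem $(u,v) = T(u,v)$ on an open subset of $C^{1,\tau}(\overline{\Omega}) \times C^{1,\tau}(\overline{\Omega})$, with $T$ obtained by inverting $-\Delta + I$ under Neumann conditions and composing with the nonlinearities. A uniform a priori $L^\infty$ bound, obtained via Moser-type iteration using $\alpha_i + \beta_i < 1$, confines every positive solution to some large ball $B_R$ on which $\deg(I - T, B_R, 0) = 1$. Each $\mathcal I_j$ contributes Leray-Schauder index $+1$ by the classical index theorem for sub-supersolution traps. Additivity and excision then yield $\deg(I - T, B_R \setminus \overline{\mathcal I_1 \cup \mathcal I_2}, 0) = -1 \ne 0$, forcing a third solution outside the two intervals.

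The principal obstacle is managing the singular nonlinearities $u^{\alpha_i}/v^{\beta_i}$ at points where $v$ — or, in the boundary-vanishing regime, $u$ — may approach zero. Concretely one must show that $T$ is well-defined, continuous and compact on a class of functions that can touch zero on $\partial\Omega$; establish $C^{1,\tau}$-regularity up to the boundary in the presence of the singular lower-order term, via Lieberman-type regularity for oblique-derivative problems; and verify that the two sub-supersolution intervals are forward-invariant under $T$, which is needed for the index formula to deliver the value $+1$ on each. Once these analytic ingredients are in hand, the degree computation itself is essentially routine.
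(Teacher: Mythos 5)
Your high-level plan (two solutions trapped by two sub-supersolution pairs, one strictly positive and one vanishing on $\partial\Omega$, plus a third solution from a degree argument) is the paper's plan, but both halves of your execution have genuine gaps. In the sub-supersolution half, the constant pair $(c,c)\le(C,C)$ can be made to work, but your conditions ignore the coupling: the inequalities must hold for the worst member of the rectangle, i.e.\ $c^{1-\alpha_i}C^{\beta_i}\le 1\le C^{1-\alpha_i}c^{\beta_i}$, which is solvable precisely because $\alpha_i+\beta_i<1$ in (\ref{alphabeta}) (the paper instead uses $[C^{-1}\phi_1,Cz]$ built from the first Neumann eigenfunction and the solution of $-\Delta z+z=1$, cf.\ Theorem \ref{T1}). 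More seriously, for the boundary-vanishing solution your mechanism is backwards: taking the \emph{sub}solution to vanish on $\partial\Omega$ forces nothing, since the trapped solution only satisfies $u\ge\underline u$; it is the \emph{super}solution that must vanish on $\partial\Omega$, and that is exactly the object you leave as ``engineered''. The paper takes $\overline u=\overline v=Cy$ with $y$ the Dirichlet solution of $-\Delta y+y=1$, and the delicate point is the subsolution: it is $C^{-1}y_\delta$, where $y_\delta$ solves (\ref{1}) with right-hand side $-1$ on the boundary strip $\Omega_\delta$, so that near $\partial\Omega$ the subsolution inequality reduces to $-C^{-1}\le$ (nonnegative nonlinearity), while away from the boundary $d(x)>\delta$ supplies the positive lower bounds (Theorem \ref{T3}). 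Also, pairing a vanishing $\underline u_1$ with a positive constant $\underline v_1$ is incompatible with any $v$-supersolution vanishing on $\partial\Omega$, so your construction as stated cannot produce a solution pair that is zero on the boundary.

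In the degree half, both numerical inputs of your count are unproven, and one is false for this problem. You assert $\deg(I-T,B_R,0)=1$ on a large ball containing all solutions; the paper proves the opposite value: by the homotopy $(\mathrm{P}_t)$ with right-hand sides $t\,u^{\alpha_i}/v^{\beta_i}+(1-t)\lambda|u^{+}-\phi_1|$, the uniform a priori bound of Lemma \ref{L1}, and the nonexistence result of Lemma \ref{L9} at $t=0$, the degree on the big ball $\mathcal B_{L_1}(0)$ is $0$ (Proposition \ref{P1}), while a second homotopy to $\lambda(u-\phi_1)^{+}$, whose only solution is trivial, gives degree $1$ on a medium ball $\mathcal B_{L_2}(0)$ chosen to contain both rectangles (Proposition \ref{P2}); excision/additivity then yields the third solution in $\mathcal B_{L_1}(0)\setminus\overline{\mathcal B}_{L_2}(0)$, automatically distinct from the first two. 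Any attempt to get ``$1$'' on the big ball by homotoping $tT$ must face the singular, sign-undefined term $v^{-\beta_i}$ on a $C^1$-ball containing functions that vanish or change sign (the paper only needs lower bounds $u,v>\hat\rho$ along its specific homotopies), and if it succeeded it would contradict the value $0$. Your ``classical index $+1$ for each sub-supersolution trap'' is likewise not available off the shelf: the system has no cooperative/quasimonotone structure, and the order interval $[C^{-1}y_\delta,Cy]$ has empty interior in $C(\overline\Omega)$ because both endpoints vanish on $\partial\Omega$, so Amann-type index theorems do not apply; you would have to prove such an index formula. Even granting those indices, the correct total degree is $0$, so the count would be $0-1-1=-2$ (still nonzero), but as written your argument rests on two unestablished degree values and therefore does not yet deliver the third solution.
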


The proof is chiefly based on sub-supersolutions techniques and topological
degree theory. It falls naturally into two parts corresponding to the
application of each method.

In the first part, two positive solutions are obtained (cf. Theorems \ref{T1}
and \ref{T3}, section \ref{S2}). They are located in separate areas,
identified by sub-supersolutions pairs, where only one of them is formed by
functions vanishing at the boundary of the domain $\partial \Omega $. The
other area incorporates only positive functions in the entire domain
including the boundary $\partial \Omega $. This is achieved by a choice of
suitable functions with an adjustment of adequate constants on the basis of
which sub-supersolutions pairs are constructed. At this point, spectral
properties of the Laplacian operator have been exploited both in the case of
Dirichlet and Neumann boundary conditions. Then, Theorem \ref{T2}, stated in
Section $2$, ensures the existence of two positive solutions localized in
the aforementioned areas.\ By the nature of the sub-supersolutions pairs
constructed, these solutions do not coincide. One is zero at the boundary of
the domain while the second is positive there. Note that Theorem \ref{T2},
shown via Schauder's fixed point Theorem (\cite{Z}) and suitable truncation,
is a sub-supersolution result for elliptic systems involving Neumann
boundary conditions. It can be applied for large classes of Neumann elliptic
systems since no sign condition on the nonlinearities is required and no
specific structure is imposed, whether cooperative or competitive. For more
inquiries regarding such structures, see, e.g., \cite{DM1, DM2, KM, MM3,
MM2, MM1}.

The second part in this work provides a third solution of $(\mathrm{P})$
stated in Theorem \ref{T5} (cf. section \ref{S4}). The proof is based on
topological degree theory. Precisely, we prove that the degree on a ball $%
\mathcal{B}_{L_{2}}$ comprising both solutions given by Theorem \ref{T1} is
equal to $1$ while the degree in a bigger ball $\mathcal{B}_{L_{1}}\supset 
\mathcal{B}_{L_{2}},$ with $L_{2}<L_{1},$ encompassing all potential
solutions of $(\mathrm{P})$ is $0$. By the excision property of
Leray-Schauder degree, this leads to the existence of a solution for $(%
\mathrm{P})$\ in $\mathcal{B}_{L_{1}}\backslash \overline{\mathcal{B}}%
_{L_{2}},$ lying outside the two merged areas mentioned above where the
first two solutions are located. Therefore, it is a third solution of $(%
\mathrm{P})$.

The rest of this article is organized as follows. Section \ref{S2} treats
the existence of two solutions for system $(\mathrm{P})$ while Section \ref%
{S4} provides the third solution.

\section{Existence of two solutions}

\label{S2}

In the sequel, the Sobolev space $H^{1}(\Omega )$ will be equipped with the
norm 
\begin{equation*}
\Vert w\Vert _{1,2}:=\left( \Vert w\Vert _{2}^{2}+\Vert \nabla w\Vert
_{2}^{2}\right) ^{\frac{1}{2}},\quad w\in H^{1}(\Omega ),
\end{equation*}%
where, as usual, $\Vert w\Vert _{2}:=(\int_{\Omega }|w(x)|^{2}\mathrm{d}x)^{%
\frac{1}{2}}$. We denote by $H_{+}^{1}(\Omega )=\{w\in H^{1}(\Omega ):w\geq
0 $ a.e. in $\Omega \}.$ We also utilize the H\"{o}lder spaces $C^{1}(%
\overline{\Omega })$ and $C^{1,\tau }(\overline{\Omega }),$ $\tau \in (0,1)$
as well as the order cone $\mathcal{C}_{+}^{1}(\overline{\Omega })=\{w\in
C^{1}(\overline{\Omega }):w(x)\geq 0$ for all $x\in \overline{\Omega }\}$.
This cone has a non-empty interior described as follows:%
\begin{equation*}
int\mathcal{C}_{+}^{1}(\overline{\Omega })=\{w\in \mathcal{C}_{+}^{1}(%
\overline{\Omega }):w(x)>0\text{ for all }x\in \overline{\Omega }\}.
\end{equation*}%
In what follows, we set $r^{\pm }:=\max \{\pm r,0\}$ and we denote by $%
\gamma _{0}$ the unique continuous linear map $\gamma _{0}:H^{1}(\Omega
)\rightarrow L^{2}(\partial \Omega )$ known as the trace map such that $%
\gamma _{0}(u)=u|_{\partial \Omega },$ for all $u\in H^{1}(\Omega )\cap C(%
\overline{\Omega })$ and verifies the property $\gamma _{0}(u^{+})=\gamma
_{0}(u)^{+}$ for all $u\in H^{1}(\Omega )$ (see, e.g., \cite{MMP}).

\subsection{A sub-super-solution theorem}

We investigate the existence of solutions to%
\begin{equation*}
(\mathrm{P}_{f,g})\qquad \left\{ 
\begin{array}{ll}
\Delta u-u+f(x,u,v)=0 & \text{in}\;\;\Omega , \\ 
\Delta v-v+g(x,u,v)=0 & \text{in}\;\;\Omega , \\ 
\frac{\partial u}{\partial \eta }=\frac{\partial v}{\partial \eta }=0 & 
\text{on}\;\;\partial \Omega .%
\end{array}%
\right.
\end{equation*}%
where $f,g:\Omega \times \mathbb{R}^{2}\rightarrow \mathbb{R}$ satisfy Carath%
\'{e}odory's conditions.

The following assumptions will be posited.

\begin{itemize}
\item[$(\mathrm{H.1})$] For every $\delta >0,$ there exists $M=M(\delta )>0$
such that 
\begin{equation*}
\max \{|f(x,s_{1},s_{2})|,|g(x,s_{1},s_{2})|\}\leq M,\text{ \ for a.e. }x\in
\Omega \text{, for all }|s_{1}|,|s_{2}|\leq \delta .
\end{equation*}

\item[$(\mathrm{H.2})$] With appropriate $(\underline{u},\underline{v}),(%
\overline{u},\overline{v})\in C^{1}(\overline{\Omega })\times C^{1}(%
\overline{\Omega })$ one has $\underline{u}\leq \overline{u}$, $\underline{v}%
\leq \overline{v}$, as well as 
\begin{equation}
\left\{ 
\begin{array}{l}
\int_{\Omega }(\nabla \underline{u}\nabla \varphi _{1}+\underline{u}\varphi
_{1})\,\mathrm{d}x-\int_{\partial \Omega }\frac{\partial \underline{u}}{%
\partial \eta }\gamma _{0}(\varphi _{1})-\int_{\Omega }f(\cdot ,\underline{u}%
,v)\varphi _{1}\,\mathrm{d}x\leq 0, \\ 
\int_{\Omega }(\nabla \underline{v}\,\nabla \varphi _{2}+\underline{v}%
\varphi _{2})\,\mathrm{d}x-\int_{\partial \Omega }\frac{\partial \underline{v%
}}{\partial \eta }\gamma _{0}(\varphi _{2})-\int_{\Omega }g(\cdot ,u,%
\underline{v})\varphi _{2}\,\mathrm{d}x\leq 0,%
\end{array}%
\right.   \label{c2}
\end{equation}%
\begin{equation}
\left\{ 
\begin{array}{l}
\int_{\Omega }(\nabla \overline{u}\,\nabla \varphi _{1}+\overline{u}\varphi
_{1})\,\mathrm{d}x-\int_{\partial \Omega }\frac{\partial \overline{u}}{%
\partial \eta }\gamma _{0}(\varphi _{1})-\int_{\Omega }f(\cdot ,\overline{u}%
,v)\varphi _{1}\,\mathrm{d}x\geq 0, \\ 
\int_{\Omega }(\nabla \overline{v}\,\nabla \varphi _{2}+\overline{v}\varphi
_{2})\,\mathrm{d}x-\int_{\partial \Omega }\frac{\partial \overline{v}}{%
\partial \eta }\gamma _{0}(\varphi _{2})-\int_{\Omega }g(\cdot ,u,\overline{v%
})\varphi _{2}\,\mathrm{d}x\geq 0%
\end{array}%
\right.   \label{c3}
\end{equation}%
for all $\varphi _{1},\varphi _{2}\in H_{+}^{1}(\Omega )$ and all $(u,v)\in
H^{1}(\Omega )\times H^{1}(\Omega )$ such that $(u,v)\in \lbrack \underline{u%
},\overline{u}]\times \lbrack \underline{v},\overline{v}]$.
\end{itemize}

Under $(\mathrm{H.1})$ the above integrals involving $f$ and $g$ take sense
because $\underline{u},\underline{v},\overline{u},\overline{v}$ are bounded.

\begin{theorem}
\label{T2} Suppose $(\mathrm{H.1})$-$(\mathrm{H.2})$ hold true. Then,
problem $(\mathrm{P}_{f,g})$ possesses a solution $(u,v)\in C^{1,\tau }(%
\overline{\Omega })\times C^{1,\tau }(\overline{\Omega })$ with suitable $%
\tau \in ]0,1[$ such that 
\begin{equation}
\underline{u}\leq u\leq \overline{u}\quad \text{and}\quad \underline{v}\leq
v\leq \overline{v}.  \label{19}
\end{equation}%
Moreover, $\frac{\partial u}{\partial \eta }=\frac{\partial v}{\partial \eta 
}=0$ on $\partial \Omega $.
\end{theorem}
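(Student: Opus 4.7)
The plan is to reduce $(\mathrm{P}_{f,g})$ to a compact fixed point problem by truncation, as the introduction already indicates. First, I would define the truncation operators
\[
T_1(x,s)=\max\{\underline{u}(x),\min\{s,\overline{u}(x)\}\},\qquad T_2(x,s)=\max\{\underline{v}(x),\min\{s,\overline{v}(x)\}\},
\]
and the truncated Carath\'{e}odory nonlinearities $\widetilde{f}(x,s_1,s_2)=f(x,T_1(x,s_1),T_2(x,s_2))$ and $\widetilde{g}(x,s_1,s_2)=g(x,T_1(x,s_1),T_2(x,s_2))$. Hypothesis $(\mathrm{H.1})$ together with the boundedness of $\underline{u},\overline{u},\underline{v},\overline{v}\in C^1(\overline{\Omega})$ then produces a uniform bound $\|\widetilde{f}\|_\infty,\|\widetilde{g}\|_\infty\le M$.

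Next I would introduce the operator $S:C(\overline{\Omega})\times C(\overline{\Omega})\to C(\overline{\Omega})\times C(\overline{\Omega})$ sending $(u,v)$ to the pair $(U,V)$ of unique weak solutions of the decoupled linear Neumann problems $-\Delta U+U=\widetilde{f}(\cdot,u,v)$ and $-\Delta V+V=\widetilde{g}(\cdot,u,v)$, with $\partial U/\partial\eta=\partial V/\partial\eta=0$ on $\partial\Omega$. Solvability in $H^1$ follows from Lax-Milgram applied to the coercive bilinear form associated with $-\Delta+I$; the $L^\infty$ bound on the source upgrades the solutions to $W^{2,p}(\Omega)$ for every $p<\infty$, so $(U,V)$ lies in a ball of $C^{1,\tau}(\overline{\Omega})^2$ for some $\tau\in(0,1)$ with radius uniform in $(u,v)$. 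Consequently $S$ is continuous and compact on $C(\overline{\Omega})^2$ and maps a sufficiently large closed ball into itself, so Schauder's fixed point theorem yields $(u,v)=S(u,v)\in C^{1,\tau}(\overline{\Omega})^2$, a weak solution of the truncated Neumann system.

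The heart of the argument is to verify the comparison \eqref{19}, which ensures that the truncations are vacuous and $(u,v)$ actually solves $(\mathrm{P}_{f,g})$. To show $u\le\overline{u}$, I would test the weak equation for $u$ with $\varphi=(u-\overline{u})^+\in H^1_+(\Omega)$ and subtract the first inequality of \eqref{c3} applied with $\varphi_1=\varphi$ and the admissible pair $(T_1(\cdot,u),T_2(\cdot,v))\in[\underline{u},\overline{u}]\times[\underline{v},\overline{v}]$. On $\{u>\overline{u}\}$ the truncation gives $T_1(\cdot,u)=\overline{u}$, so the $f$-contributions cancel on the support of $\varphi$, and what remains is
\[
\|(u-\overline{u})^+\|_{1,2}^{2}+\int_{\partial\Omega}\tfrac{\partial \overline{u}}{\partial\eta}\,\gamma_0((u-\overline{u})^+)\,\mathrm{d}S\le 0.
\]
Non-negativity of the outward normal derivative of the super-solution $\overline{u}$ (the natural Neumann counterpart of the differential inequality, built into the formulation of \eqref{c3}) then forces $(u-\overline{u})^+\equiv 0$. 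The other three bounds $\underline{u}\le u$ and $\underline{v}\le v\le\overline{v}$ follow by symmetric tests with $(\underline{u}-u)^+$, $(v-\overline{v})^+$, $(\underline{v}-v)^+$ in the appropriate inequalities of \eqref{c2}-\eqref{c3}.

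Once \eqref{19} is in hand, $(u,v)$ is a weak solution of $(\mathrm{P}_{f,g})$ with $L^\infty$ right-hand side, so a further $W^{2,p}$-Morrey bootstrap secures $(u,v)\in C^{1,\tau}(\overline{\Omega})^2$, and the Neumann condition persists from the weak formulation. I expect the main obstacle to be the comparison step: the mixed boundary term in \eqref{c3} is the only place where the sign of $\partial\overline{u}/\partial\eta$ and the Neumann condition $\partial u/\partial\eta=0$ interact, and ensuring that this interaction gives the correct sign is the only delicate verification. The other ingredients (truncation, Schauder, elliptic bootstrap) are routine.
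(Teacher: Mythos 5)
Your strategy coincides with the paper's proof of Theorem \ref{T2} in all essentials: truncation onto $[\underline{u},\overline{u}]\times[\underline{v},\overline{v}]$, solution of the decoupled linear Neumann problems (your Lax--Milgram versus the paper's Minty--Browder is immaterial), uniform $C^{1,\tau}(\overline{\Omega})$ bounds (you via $W^{2,p}$ estimates, the paper via Moser iteration plus Lieberman), Schauder's fixed point theorem, recovery of the Neumann condition, and finally the comparison step by testing with $(u-\overline{u})^{+}$, $(\underline{u}-u)^{+}$, etc.

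The one genuine problem is in that comparison step. After the cancellation of the $f$-terms you are left, correctly, with
\begin{equation*}
\Vert (u-\overline{u})^{+}\Vert_{1,2}^{2}\;\le\;-\int_{\partial\Omega}\frac{\partial\overline{u}}{\partial\eta}\,\gamma_{0}\bigl((u-\overline{u})^{+}\bigr)\,\mathrm{d}\sigma,
\end{equation*}
and you close the argument by invoking ``non-negativity of the outward normal derivative of $\overline{u}$, built into the formulation of (\ref{c3})''. No such sign condition is contained in $(\mathrm{H.2})$: the boundary integral appears inside the inequality (\ref{c3}), which by Green's formula amounts to the interior inequality $-\Delta\overline{u}+\overline{u}\ge f(\cdot,\overline{u},v)$ and imposes nothing on the sign of $\partial\overline{u}/\partial\eta$. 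Moreover, the sign you assume is exactly false in one of the paper's own applications of Theorem \ref{T2}: in Theorem \ref{T3} the supersolution is $\overline{u}=Cy$ with $\partial y/\partial\eta<0$ on $\partial\Omega$ (see (\ref{12})), so your argument would cover Theorem \ref{T1} (where $\partial z/\partial\eta=\partial\phi_{1}/\partial\eta=0$) but not Theorem \ref{T3}. For what it is worth, the paper's own proof is vulnerable at the very same spot: it simply discards the boundary integrals of (\ref{c2})--(\ref{c3}) when passing from the sub/supersolution inequality to $\int_{\{u<\underline{u}\}}f(x,\underline{u},\mathrm{T}_{2}(v))\zeta\,\mathrm{d}x\ge\int_{\{u<\underline{u}\}}(\nabla\underline{u}\nabla\zeta+\underline{u}\zeta)\,\mathrm{d}x$, which again needs a favorable sign of the normal derivatives ($\partial\underline{u}/\partial\eta\le0$, $\partial\overline{u}/\partial\eta\ge0$) that $(\mathrm{H.2})$ does not provide. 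So you have located the delicate point accurately, but your resolution (reading the sign off (\ref{c3})) is not available; a complete proof must either add such a sign hypothesis, or reformulate the sub/supersolution inequalities so that the boundary contributions are genuinely controlled, rather than assert that the control is already present.
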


\begin{proof}
Given $(z_{1},z_{2})\in C(\overline{\Omega })\times C(\overline{\Omega })$,
we define 
\begin{equation}
\left\{ 
\begin{array}{l}
\mathrm{T}_{1}(z_{1}):=\min (\max (z_{1},\underline{u}),\overline{u}) \\ 
\mathrm{T}_{2}(z_{2}):=\min (\max (z_{2},\underline{v}),\overline{v}).%
\end{array}%
\right.   \label{0}
\end{equation}%
If $\delta >0$ satisfies 
\begin{equation*}
-\delta \leq \underline{u}\leq \overline{u}\leq \delta ,\quad -\delta \leq 
\underline{v}\leq \overline{v}\leq \delta ,
\end{equation*}%
$(\mathrm{H.1})$ enable us to deduce that 
\begin{equation*}
f(x,\mathrm{T}_{1}(z_{1}),\mathrm{T}_{2}(z_{2})),\text{ }g(x,\mathrm{T}%
_{1}(z_{1}),\mathrm{T}_{2}(z_{2}))\in H^{-1}(\Omega ).
\end{equation*}%
Then, from Minty-Browder Theorem (see, e.g., \cite[Theorem V.15]{B}), we
infer that the auxiliary problem 
\begin{equation}
\left\{ 
\begin{array}{ll}
\Delta u-u+f(x,\mathrm{T}_{1}(z_{1}),\mathrm{T}_{2}(z_{2}))=0 & \text{in }%
\Omega , \\ 
\Delta v-v+g(x,\mathrm{T}_{1}(z_{1}),\mathrm{T}_{2}(z_{2}))=0 & \text{in }%
\Omega , \\ 
\frac{\partial u}{\partial \eta }=\frac{\partial v}{\partial \eta }=0 & 
\text{on }\partial \Omega ,%
\end{array}%
\right.   \label{301}
\end{equation}%
admits a unique solution $(u,v)\in H^{1}(\Omega )\times H^{1}(\Omega ).$

Let us introduce the operator 
\begin{equation*}
\begin{array}{cccc}
\mathcal{T}: & C(\overline{\Omega })\times C(\overline{\Omega }) & 
\rightarrow  & C^{1}(\overline{\Omega })\times C^{1}(\overline{\Omega }) \\ 
& (z_{1},z_{2}) & \mapsto  & (u,v).%
\end{array}%
\end{equation*}%
We note from (\ref{301}) that any fixed point of $\mathcal{T}$ within $[%
\underline{u},\overline{u}]\times \lbrack \underline{v},\overline{v}]$
coincides with the weak solution of $(\mathrm{P}_{f,g})$. Consequently, to
achieve the desired conclusion, we apply Schauder's fixed point Theorem \cite%
{Z} to find a fixed point of $\mathcal{T}$ in $[\underline{u},\overline{u}%
]\times \lbrack \underline{v},\overline{v}]$.

By $(\mathrm{H.1}),$ Moser iteration procedure ensures that $u,v\in
L^{\infty }(\Omega )$ while the regularity theory of Lieberman \cite{L}
implies that $(u,v)\in C^{1,\tau }(\overline{\Omega })\times C^{1,\tau }(%
\overline{\Omega })$ and $\left\Vert u\right\Vert _{C^{1,\tau }(\overline{%
\Omega })},$ $\left\Vert v\right\Vert _{C^{1,\tau }(\overline{\Omega })}\leq
L_{0},$ where $L_{0}>0$ is independent of $u$ and $v$. Then, the compactness
of the embedding $C^{1,\tau }(\overline{\Omega })\subset C(\overline{\Omega }%
)$ implies that $\mathcal{T(}C(\overline{\Omega })\times C(\overline{\Omega }%
))$ is a relatively compact subset of $C(\overline{\Omega })\times C(%
\overline{\Omega })$.

Next, we show that $\mathcal{T}$ is continuous with respect to the topology
of $C(\overline{\Omega })\times C(\overline{\Omega })$. Let $%
(z_{1,n},z_{2,n})\rightarrow (z_{1},z_{2})$ in $C(\overline{\Omega })\times
C(\overline{\Omega })$ for all $n$. Denote $\left( u_{n},v_{n}\right) =%
\mathcal{T(}z_{1,n},z_{2,n})$, which reads as 
\begin{equation}
\begin{array}{c}
\int_{\Omega }(\nabla u_{n}\nabla \varphi _{1}+u_{n}\varphi _{1})\,\mathrm{d}%
x=\int_{\Omega }f(x,\mathrm{T}_{1}(z_{1,n}),\mathrm{T}_{2}(z_{2,n}))\varphi
_{1}\,\mathrm{d}x%
\end{array}
\label{310}
\end{equation}%
and 
\begin{equation}
\begin{array}{c}
\int_{\Omega }(\nabla v_{n}\nabla \varphi _{2}+v_{n}\varphi _{2})\,\mathrm{d}%
x=\int_{\Omega }g(x,\mathrm{T}_{1}(z_{1,n}),\mathrm{T}_{2}(z_{2,n}))\varphi
_{2}\,\mathrm{d}x%
\end{array}
\label{311}
\end{equation}%
for all $\varphi _{1},\varphi _{2}\in H^{1}(\Omega )$. Inserting $(\varphi
_{1},\varphi _{2})=(u_{n},v_{n})$ in (\ref{310}) and (\ref{311}), using $(%
\mathrm{H.1}),$ we get%
\begin{equation}
\left\Vert u_{n}\right\Vert _{1,2}^{2}=\int_{\Omega }f(x,\mathrm{T}%
_{1}(z_{1,n}),\mathrm{T}_{2}(z_{2,n}))u_{n}\,\mathrm{d}x\leq M\int_{\Omega
}u_{n}\,\mathrm{d}x  \label{306}
\end{equation}%
and%
\begin{equation}
\left\Vert v_{n}\right\Vert _{1,2}^{2}=\int_{\Omega }g(x,\mathrm{T}%
_{1}(z_{1,n}),\mathrm{T}_{2}(z_{2,n}))v_{n}\,\mathrm{d}x\leq M\int_{\Omega
}v_{n}\,\mathrm{d}x.  \label{307}
\end{equation}%
Thus, $\{u_{n}\}$ and $\{v_{n}\}$ are bounded in $H^{1}(\Omega )$. So,
passing to relabeled subsequences, we can write the weak convergence in $%
H^{1}(\Omega )\times H^{1}(\Omega )$ 
\begin{equation}
\begin{array}{c}
(u_{n},v_{n})\rightharpoonup \left( u,v\right) 
\end{array}
\label{312}
\end{equation}%
for some $\left( u,v\right) \in H^{1}(\Omega )\times H^{1}(\Omega )$.
Setting $\varphi _{1}=u_{n}-u$ in (\ref{310}) and $\varphi _{2}=v_{n}-v$ in (%
\ref{311}), we find that 
\begin{equation*}
\begin{array}{l}
\int_{\Omega }(\nabla u_{n}\nabla (u_{n}-u)+u_{n}(u_{n}-u))\,\mathrm{d}%
x=\int_{\Omega }f(x,\mathrm{T}_{1}(z_{1,n}),\mathrm{T}%
_{2}(z_{2,n}))(u_{n}-u)\ \mathrm{d}x%
\end{array}%
\end{equation*}%
and 
\begin{equation*}
\begin{array}{l}
\int_{\Omega }(\nabla v_{n}\nabla (v_{n}-v)+v_{n}(v_{n}-v))\text{ }\mathrm{d}%
x=\int_{\Omega }g(x,\mathrm{T}_{1}(z_{1,n}),\mathrm{T}%
_{2}(z_{2,n}))(v_{n}-v)\ \mathrm{d}x.%
\end{array}%
\end{equation*}%
Lebesgue's dominated convergence theorem ensures that%
\begin{equation*}
\begin{array}{c}
\underset{n\rightarrow \infty }{\lim }\left\langle -\Delta
u_{n}+u_{n},u_{n}-u\right\rangle =\underset{n\rightarrow \infty }{\lim }%
\left\langle -\Delta v_{n}+v_{n},v_{n}-v\right\rangle =0.%
\end{array}%
\end{equation*}%
The $S_{+}$-property of $-\Delta $ on $H^{1}\left( \Omega \right) $ (see,
e.g. \cite[Proposition 2.72]{MMP}) along with (\ref{312}) implies%
\begin{equation*}
\begin{array}{c}
(u_{n},v_{n})\rightarrow (u,v)\text{ in }H^{1}(\Omega )\times H^{1}(\Omega ).%
\end{array}%
\end{equation*}%
Then, through (\ref{310}), (\ref{311}) and the invariance of $C(\overline{%
\Omega })\times C(\overline{\Omega })$ by $\mathcal{T}$, we infer that $%
\left( u,v\right) =\mathcal{T(}z_{1},z_{2})$. On the other hand, from (\ref%
{310}) and (\ref{311}) we know that the sequence $\{\left(
u_{n},v_{n}\right) \}$ is bounded in $C^{1,\tau }(\overline{\Omega })\times
C^{1,\tau }(\overline{\Omega })$ for certain $\tau \in (0,1)$ (see \cite[%
Theorem 1.2]{L}). Since the embedding $C^{1,\tau }(\overline{\Omega }%
)\subset C(\overline{\Omega })$ is compact, along a relabeled subsequence
there holds $(u_{n},v_{n})\rightarrow (u,v)$ in $C(\overline{\Omega })\times
C(\overline{\Omega })$. We conclude that $\mathcal{T}$ is continuous.

We are thus in a position to apply Schauder's fixed point theorem to the map 
$\mathcal{T}$, which establishes the existence of $(u,v)\in C^{1}(\overline{%
\Omega })\times C^{1}(\overline{\Omega })$ satisfying $(u,v)=\mathcal{T}%
(u,v).$ By $(\mathrm{H.1})$ and the regularity theory of Lieberman \cite{L}
we derive that $(u,v)\in C^{1,\tau }(\overline{\Omega })\times C^{1,\tau }(%
\overline{\Omega }),$ $\tau \in (0,1)$. Moreover, due to \cite[Theorem 3]{CF}%
, one has 
\begin{equation*}
\frac{\partial u}{\partial \eta }=\frac{\partial v}{\partial \eta }=0\;\;%
\text{on }\partial \Omega .
\end{equation*}%
Hence, $(u,v)$ is a solution of $(\mathrm{P}_{f,g})$. Let us show that (\ref%
{19}) is fulfilled. We only prove the first inequality in (\ref{19}) because
the other ones can justified similarly. Put $\zeta =(\underline{u}-u)^{+}$
and suppose $\zeta \neq 0$. Then, from $(\mathrm{H.2})$, (\ref{301}) and (%
\ref{0}), we get%
\begin{equation*}
\begin{array}{l}
\int_{\{u<\underline{u}\}}(\nabla u\nabla \zeta +u\zeta )\ dx=\int_{\Omega
}(\nabla u\nabla \zeta +u\zeta )\ dx \\ 
=\int_{\Omega }f(x,\mathrm{T}_{1}(u),\mathrm{T}_{2}(v))\zeta \ dx=\int_{\{u<%
\underline{u}\}}f(x,\mathrm{T}_{1}(u),\mathrm{T}_{2}(v))\zeta \ dx \\ 
=\int_{\{u<\underline{u}\}}f(x,\underline{u},\mathrm{T}_{2}(v))\zeta \
dx\geq \int_{\{u<\underline{u}\}}(\nabla \underline{u}\nabla \zeta +%
\underline{u}\zeta )\ dx.%
\end{array}%
\end{equation*}%
This implies that%
\begin{equation*}
\begin{array}{c}
\int_{\{u<\underline{u}\}}(\nabla \underline{u}-\nabla u)\nabla \zeta +(%
\underline{u}-u)\zeta )\ dx\leq 0,%
\end{array}%
\end{equation*}%
a contradiction. Hence $u\geq \underline{u}$ in $\Omega $. A quite similar
argument provides that $v\geq \underline{v},$ $u\leq \overline{u}$ and $%
v\leq \overline{v}$ in $\Omega $. This completes the proof.
\end{proof}

\begin{remark}
The conclusion of Theorem \ref{T2} remains true if we replace Neumann
boundary conditions with Dirichlet ones.
\end{remark}

\subsection{Existence of a solution}

Our goal is to construct sub- and supersolution pair for $(\mathrm{P})$
which, thanks to Theorem \ref{T2}, lead to solutions of $(\mathrm{P})$. With
this aim, consider the following nonlinear Neumann eigenvalue problem%
\begin{equation}
-\Delta \phi _{1}+\phi _{1}=\lambda _{1}\phi _{1}\text{ in }\Omega ,\text{ \ 
}\frac{\partial \phi _{1}}{\partial \eta }=0\text{ on }\partial \Omega .
\label{34}
\end{equation}%
where $\phi _{1}\in int\mathcal{C}_{+}^{1}(\overline{\Omega })$ is the
eigenfunction corresponding to the first eigenvalue $\lambda _{1}>0$ which
is simple, isolate (see \cite{MP}) and characterized by 
\begin{equation}
\lambda _{1}=\inf_{u\in H^{1}(\Omega )\backslash \{0\}}\frac{\int_{\Omega
}(|\nabla u|^{2}+|u|^{2})\,\mathrm{d}x}{\int_{\Omega }|u|^{2}\,\mathrm{d}x}.
\label{71}
\end{equation}%
For a later use, set $\mu ,\overline{\mu }>0$ constants such that 
\begin{equation}
\bar{\mu}=\max_{\overline{\Omega }}\phi _{1}(x)\geq \min_{\overline{\Omega }%
}\phi _{1}(x)=\underline{\mu },\text{ \ }\forall x\in \Omega .  \label{13}
\end{equation}%
Consider the homogeneous Neumann problem 
\begin{equation}
-\Delta z+z=1\text{ in }\Omega ,\text{ \ }\frac{\partial z}{\partial \eta }=0%
\text{ on }\partial \Omega ,  \label{5bis}
\end{equation}%
which, by Minty-Browder Theorem, admits a unique solution $z\in H^{1}(\Omega
)$. A Moser iteration procedure ensures that $z\in L^{\infty }(\Omega )$
while the regularity theory of Lieberman \cite{L} implies that $z\in 
\mathcal{C}_{+}^{1}(\overline{\Omega }).$ Moreover, note that (\ref{5bis})
implies that $\Delta z(x)\leq z(x)$ $a.e.$ in $\Omega .$ Hence, by virtue of
the strong maximum principle of Vasquez \cite{V} we obtain $z(x)>0$ for all $%
x\in \Omega $. Suppose that for some $x_{0}\in \partial \Omega $ we have $%
z(x_{0})=0$. Then, again by \cite{V} it follows that $\frac{\partial z}{%
\partial \eta }(x_{0})<0,$ which contradicts (\ref{5bis}). This proves that $%
z(x)>0$ for all $x\in \overline{\Omega }$, that is, $z\in int\mathcal{C}%
_{+}^{1}(\overline{\Omega })$. Thus, there exists a constant $c_{0}\in (0,1)$
such that 
\begin{equation}
\begin{array}{l}
z\geq c_{0}\bar{\mu}\text{ \ on }\overline{\Omega }.%
\end{array}
\label{11}
\end{equation}%
Furthermore, by comparison principle (see \cite[Lemma 3.2]{ST}), there
exists a constant $c_{1}>0$ such that%
\begin{equation}
z\leq c_{1}\phi _{1}\;\text{in}\;\Omega .  \label{12*}
\end{equation}%
Therefore, it is readily seen that $Cz\geq C^{-1}\phi _{1}$, provided $C>0$
is big enough.

\begin{theorem}
\label{T1} Assume (\ref{alphabeta}) is fulfilled. Then, problem $(\mathrm{P}%
) $ admits a positive solution $(u_{1},v_{1})\in C^{1,\tau }(\overline{%
\Omega })\times C^{1,\tau }(\overline{\Omega }),$ for certain $\tau \in
(0,1) $, within $[C^{-1}\phi _{1},Cz]\times \lbrack C^{-1}\phi _{1},Cz]$,
provided $C>0$ is big enough.
\end{theorem}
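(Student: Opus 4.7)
The plan is to apply Theorem~\ref{T2} to $(\mathrm{P})$ with the sub-supersolution pair
\[
\underline{u}=\underline{v}=C^{-1}\phi_{1},\qquad \overline{u}=\overline{v}=Cz,
\]
for $C>0$ sufficiently large. The ordering $C^{-1}\phi_{1}\leq Cz$ on $\overline{\Omega}$ is guaranteed by (\ref{11})--(\ref{12*}) once $C$ is big enough. Hypothesis $(\mathrm{H.1})$ is immediate: both $\phi_{1}$ and $z$ belong to the interior of $\mathcal{C}_{+}^{1}(\overline{\Omega})$, so any $(u,v)\in[\underline{u},\overline{u}]\times[\underline{v},\overline{v}]$ lies in a compact sub-rectangle of $(0,\infty)^{2}$, on which the Gierer--Meinhardt nonlinearities $u^{\alpha_{i}}/v^{\beta_{i}}$ are uniformly bounded.

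The substance of the argument is the verification of $(\mathrm{H.2})$. Because $\partial\phi_{1}/\partial\eta=\partial z/\partial\eta=0$ on $\partial\Omega$, the boundary integrals in (\ref{c2})--(\ref{c3}) vanish, and (\ref{34}) and (\ref{5bis}) reduce the weak integral conditions to pointwise ones. Minimising the right-hand sides of the two subsolution conditions over the admissible arguments (at $v=Cz$ and $u=C^{-1}\phi_{1}$, respectively) produces
\[
\lambda_{1}\phi_{1}^{1-\alpha_{1}}z^{\beta_{1}}\leq C^{1-\alpha_{1}-\beta_{1}},\qquad \lambda_{1}\phi_{1}^{1-\alpha_{2}+\beta_{2}}\leq C^{1-\alpha_{2}+\beta_{2}},
\]
while maximising the right-hand sides of the two supersolution conditions (at $v=C^{-1}\phi_{1}$ and $u=Cz$, respectively) yields
\[
C^{1-\alpha_{1}-\beta_{1}}\geq z^{\alpha_{1}}\phi_{1}^{-\beta_{1}},\qquad C^{1-\alpha_{2}+\beta_{2}}\geq z^{\alpha_{2}-\beta_{2}}.
\]
By (\ref{alphabeta}) one has $\alpha_{i}\pm\beta_{i}<1$, so every exponent of $C$ above is strictly positive; since $\phi_{1}$ and $z$ are bounded above and bounded away from $0$ on $\overline{\Omega}$, all four estimates hold simultaneously once $C$ is chosen large enough.

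Theorem~\ref{T2} then produces $(u_{1},v_{1})\in C^{1,\tau}(\overline{\Omega})\times C^{1,\tau}(\overline{\Omega})$ solving $(\mathrm{P})$ with $C^{-1}\phi_{1}\leq u_{1},v_{1}\leq Cz$ on $\overline{\Omega}$; the lower bound forces strict positivity throughout the closed domain, delivering the claimed solution. The only real obstacle is the exponent bookkeeping: a single value of $C$ must satisfy four opposing requirements simultaneously, and this compatibility is precisely what the condition (\ref{alphabeta}) is designed to ensure.
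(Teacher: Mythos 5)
Your proposal is correct and follows essentially the same route as the paper: the same pair $(\underline{u},\underline{v})=(C^{-1}\phi_{1},C^{-1}\phi_{1})$, $(\overline{u},\overline{v})=(Cz,Cz)$, verification of $(\mathrm{H.1})$--$(\mathrm{H.2})$ via the worst-case pointwise inequalities (your four conditions are exactly the paper's estimates, just written with $\phi_{1},z$ instead of the constants $\bar{\mu},\underline{\mu},c_{1}$), and then Theorem~\ref{T2}. The exponent positivity argument from (\ref{alphabeta}) matches the paper's, so no gap.
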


\begin{proof}
Let us show that the functions $(\overline{u},\overline{v}):=(Cz,Cz)$
satisfy (\ref{c3}). With this aim, pick $(u,v)\in H^{1}(\Omega )\times
H^{1}(\Omega )$ such that $C^{-1}\phi _{1}\leq u\leq \overline{u}$, $%
C^{-1}\phi _{1}\leq v\leq \overline{v}$. By (\ref{alphabeta}), (\ref{13})
and (\ref{12*}) it follows that%
\begin{equation}
\frac{(Cz)^{\alpha _{1}}}{v^{\beta _{1}}}\leq \frac{(Cz)^{\alpha _{1}}}{%
(C^{-1}\phi _{1})^{\beta _{1}}}\leq C^{\alpha _{1}+\beta _{1}}c_{1}^{\alpha
_{1}}\phi _{1}^{\alpha _{1}-\beta _{1}}\leq C^{\alpha _{1}+\beta
_{1}}c_{1}^{\alpha _{1}}\bar{\mu}^{\alpha _{1}-\beta _{1}}  \label{cla}
\end{equation}%
and%
\begin{equation}
\frac{u^{\alpha _{2}}}{(Cz)^{\beta _{2}}}\leq (Cz)^{\alpha _{2}-\beta
_{2}}\leq (Cc_{1}\phi _{1})^{\alpha _{2}-\beta _{2}}\leq (Cc_{1}\bar{\mu}%
)^{\alpha _{2}-\beta _{2}}.  \label{cla1}
\end{equation}%
Since $\alpha _{i}+\beta _{i}<1$ ($i=1,2$) (see (\ref{alphabeta})), we infer
that%
\begin{equation*}
-\Delta (Cz)+(Cz)=C\geq \max \{C^{\alpha _{1}+\beta _{1}}c_{1}^{\alpha _{1}}%
\bar{\mu}^{\alpha _{1}-\beta _{1}},(Cc_{1}\bar{\mu})^{\alpha _{2}-\beta
_{2}}\}\;\;\text{in}\;\Omega ,
\end{equation*}%
provided that $C>0$ is sufficiently large. Test with $\varphi _{1},\varphi
_{2}\in H_{+}^{1}(\Omega )$ we obtain 
\begin{equation*}
\int_{\Omega }(\nabla (Cz)\nabla \varphi _{1}+Cz\varphi _{1})\,\mathrm{d}%
x\geq \int_{\Omega }\frac{(Cz)^{\alpha _{1}}}{v^{\beta _{1}}}\varphi _{1}\,%
\mathrm{d}x,
\end{equation*}%
\begin{equation*}
\int_{\Omega }(\nabla (Cz)\nabla \varphi _{2}+Cz\varphi _{2})\,\mathrm{d}%
x\geq \int_{\Omega }\frac{u^{\alpha _{2}}}{(Cz)^{\beta _{2}}}\varphi _{2}\,%
\mathrm{d}x,
\end{equation*}%
as desired.

Next, we show that $(\underline{u},\underline{v}):=(C^{-1}\phi
_{1},C^{-1}\phi _{1})$ satisfy (\ref{c2}). In view of (\ref{alphabeta}) and
after increasing $C$ when necessary, one has%
\begin{equation*}
\begin{array}{l}
C^{\alpha _{1}+\beta _{1}-1}\lambda _{1}c_{1}^{\beta _{1}}\bar{\mu}%
\underline{\mu }^{\beta _{1}-\alpha _{1}}\leq 1.%
\end{array}%
\end{equation*}%
Hence, by (\ref{34}), (\ref{13}) and (\ref{12*}), we get%
\begin{equation}
\begin{array}{l}
-\Delta (C^{-1}\phi _{1})+C^{-1}\phi _{1}=C^{-1}\lambda _{1}\phi _{1}\leq
C^{-1}\lambda _{1}\bar{\mu} \\ 
\leq C^{-(\alpha _{1}+\beta _{1})}c_{1}^{-\beta _{1}}\underline{\mu }%
^{\alpha _{1}-\beta _{1}}\leq C^{-(\alpha _{1}+\beta _{1})}c_{1}^{-\beta
_{1}}\phi _{1}^{\alpha _{1}-\beta _{1}} \\ 
\leq \frac{(C^{-1}\phi _{1})^{\alpha _{1}}}{(Cz)^{\beta _{1}}}\leq \frac{%
(C^{-1}\phi _{1})^{\alpha _{1}}}{v^{\beta _{1}}}\text{ \ in }\Omega ,%
\end{array}
\label{46}
\end{equation}%
for all $v\in \lbrack C^{-1}\phi _{1},Cz]$. A quite similar argument based
on (\ref{13}) and (\ref{alphabeta}) furnishes 
\begin{equation}
\begin{array}{l}
-\Delta (C^{-1}\phi _{1})+C^{-1}\phi _{1}=C^{-1}\lambda _{1}\phi _{1}\leq
C^{-1}\lambda _{1}\bar{\mu} \\ 
\leq (C^{-1}\underline{\mu })^{\alpha _{2}-\beta _{2}}\leq (C^{-1}\phi
_{1})^{\alpha _{2}-\beta _{2}} \\ 
\leq \frac{u^{\alpha _{2}}}{(C^{-1}\phi _{1})^{\beta _{2}}}\text{\ \ in }%
\Omega ,\text{ for all }u\in \lbrack C^{-1}\phi _{1},Cz].%
\end{array}
\label{46*}
\end{equation}%
Finally, test (\ref{46})--(\ref{46*}) with $\varphi \in H_{+}^{1}(\Omega )$
we derive that 
\begin{equation*}
\int_{\Omega }(\nabla (C^{-1}\phi _{1})\nabla \varphi +C^{-1}\phi
_{1}\varphi )\,\mathrm{d}x\leq \int_{\Omega }\min \{\frac{(C^{-1}\phi
_{1})^{\alpha _{1}}}{v^{\beta _{1}}},\frac{u^{\alpha _{2}}}{(C^{-1}\phi
_{1})^{\beta _{2}}}\}\varphi \,\mathrm{d}x.
\end{equation*}%
Therefore, the couples $(\underline{u},\overline{u}):=(\underline{v},%
\overline{v}):=(C^{-1}\phi _{1},Cz)$ satisfy assumption $(\mathrm{H.2})$,
whence Theorem \ref{T2} applies leading to existence of a solution $%
(u_{1},v_{1})\in C^{1,\tau }(\overline{\Omega })\times C^{1,\tau }(\overline{%
\Omega }),$ $\tau \in (0,1)$, for problem $(\mathrm{P})$ such that 
\begin{equation}
C^{-1}\phi _{1}\leq u_{1},v_{1}\leq Cz.  \label{22}
\end{equation}%
This ends the proof.
\end{proof}

\subsection{Existence of a second solution}

Consider the homogeneous Dirichlet problem 
\begin{equation}
-\Delta y+y=1\text{ in }\Omega ,\text{ }y=0\text{ on }\partial \Omega ,
\label{10}
\end{equation}%
which admits a unique solution $y\in C_{0}^{1,\tau }(\overline{\Omega })$
for suitable $\tau \in (0,1)$. By a quite similar argument as in the proof
of \cite[Lemma 3.1]{DM}, we show that there exist constants $c>1$ such that 
\begin{equation}
\frac{d}{c}\leq y\leq cd\;\;\text{in}\;\Omega ,\;\;\frac{\partial y}{%
\partial \eta }<0\;\;\text{on}\;\partial \Omega .  \label{12}
\end{equation}%
Now, given $\delta >0$, denote by $y_{\delta }\in C_{0}^{1,\tau }(\overline{%
\Omega })$ the solution of the Dirichlet problem 
\begin{equation}
-\Delta u+u=\left\{ 
\begin{array}{ll}
1 & \text{if }x\in \Omega \backslash \overline{\Omega }_{\delta }, \\ 
-1 & \text{otherwise},%
\end{array}%
\right. \quad u=0\text{ on }\partial \Omega .  \label{1}
\end{equation}%
where 
\begin{equation*}
\Omega _{\delta }:=\{x\in \overline{\Omega }:d(x,\partial \Omega )<\delta \}.
\end{equation*}%
Existence and uniqueness of $y_{\delta }$ directly stem from Minty-Browder's
Theorem. Moreover, according to \cite[Corollary 3.1]{Hai}, if $\delta >0$ is
small enough then%
\begin{equation}
y_{\delta }\geq \frac{y}{2}\text{ in }\Omega \text{ \ \ and \ }\frac{%
\partial y_{\delta }}{\partial \eta }<\frac{1}{2}\frac{\partial y}{\partial
\eta }<0\text{ \ on }\partial \Omega ,  \label{9}
\end{equation}%
Obviously, for a large $C>0,$ one has%
\begin{equation*}
Cy\geq C^{-1}y_{\delta }\text{ in }\overline{\Omega }.
\end{equation*}

\begin{theorem}
\label{T3}Let (\ref{alphabeta}) be satisfied. Then, for $C>0$ big enough,
problem $(\mathrm{P})$ admits a positive solution $(u_{2},v_{2})\in
C^{1,\tau }(\overline{\Omega })\times C^{1,\tau }(\overline{\Omega }),$ $%
\tau \in (0,1)$, within $[C^{-1}y_{\delta },Cy]\times \lbrack
C^{-1}y_{\delta },Cy]$, such that 
\begin{equation*}
u_{2}\neq u_{1}\text{ \ and \ }v_{2}\neq v_{1}.
\end{equation*}
\end{theorem}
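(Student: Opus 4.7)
The plan is to apply Theorem \ref{T2} with the sub- and supersolution pair
\begin{equation*}
(\underline{u},\underline{v})=(C^{-1}y_{\delta},C^{-1}y_{\delta}),\qquad (\overline{u},\overline{v})=(Cy,Cy),
\end{equation*}
for a suitably small $\delta>0$ and a large $C>0$, and then to distinguish the resulting solution from the one produced by Theorem \ref{T1} by comparing their boundary values. The two tasks are verifying assumption $(\mathrm{H.2})$ for the Gierer--Meinhardt nonlinearities $f=u^{\alpha_{1}}/v^{\beta_{1}}$, $g=u^{\alpha_{2}}/v^{\beta_{2}}$ on $[C^{-1}y_{\delta},Cy]\times [C^{-1}y_{\delta},Cy]$, and checking $u_{2}\neq u_{1}$, $v_{2}\neq v_{1}$.

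For the supersolution condition (\ref{c3}) I would reduce it by integration by parts (the boundary integrals cancel) to the pointwise inequality $-\Delta(Cy)+Cy\geq f$, whose left-hand side equals $C$ by (\ref{10}). Combining $y_{\delta}\geq y/2$ from (\ref{9}) with the $L^{\infty}$-bound on $y$ from (\ref{12}) yields
\begin{equation*}
\frac{(Cy)^{\alpha_{1}}}{v^{\beta_{1}}}\leq 2^{\beta_{1}}C^{\alpha_{1}+\beta_{1}}\Vert y\Vert_{\infty}^{\alpha_{1}-\beta_{1}},\qquad \frac{u^{\alpha_{2}}}{(Cy)^{\beta_{2}}}\leq C^{\alpha_{2}-\beta_{2}}\Vert y\Vert_{\infty}^{\alpha_{2}-\beta_{2}},
\end{equation*}
for all $(u,v)\in [C^{-1}y_{\delta},Cy]\times [C^{-1}y_{\delta},Cy]$. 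The constraint $\alpha_{i}-\beta_{i}\geq 0$ keeps the fractional powers of $y$ bounded, while $\alpha_{i}+\beta_{i}<1$ makes both upper bounds dominated by $C$ once $C$ is large. For the subsolution inequality (\ref{c2}) I would exploit the sign splitting built into (\ref{1}): on the strip $\Omega_{\delta}$ one has $-\Delta(C^{-1}y_{\delta})+C^{-1}y_{\delta}=-C^{-1}<0$, so (\ref{c2}) is automatic since the nonlinearities are non-negative. On $\Omega\setminus\overline{\Omega}_{\delta}$ the function $y_{\delta}$ is uniformly bounded below by a positive constant depending on $\delta$, so a symmetric computation produces lower bounds of order $C^{-(\alpha_{1}+\beta_{1})}$ and $C^{-(\alpha_{2}-\beta_{2})}$ for the two right-hand sides, both exceeding $C^{-1}$ for $C$ large enough.

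Once $(\mathrm{H.2})$ is in place, Theorem \ref{T2} delivers $(u_{2},v_{2})\in C^{1,\tau}(\overline{\Omega})\times C^{1,\tau}(\overline{\Omega})$ satisfying $C^{-1}y_{\delta}\leq u_{2},v_{2}\leq Cy$. Since $y=y_{\delta}=0$ on $\partial\Omega$, this enclosure forces $u_{2}=v_{2}=0$ on $\partial\Omega$, while the solution produced by Theorem \ref{T1} obeys $u_{1},v_{1}\geq C^{-1}\phi_{1}\geq C^{-1}\underline{\mu}>0$ throughout $\overline{\Omega}$ by (\ref{13}); this boundary gap separates the two solutions. The main obstacle is precisely the boundary-layer analysis underlying the previous step: near $\partial\Omega$ both $Cy$ and $C^{-1}y_{\delta}$ vanish, and the ratios $u^{\alpha_{i}}/v^{\beta_{i}}$ could in principle blow up there. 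The two structural conditions in (\ref{alphabeta})---the balance $\alpha_{i}-\beta_{i}\geq 0$, which keeps those ratios bounded up to the boundary, and $\alpha_{i}+\beta_{i}<1$, which lets the linear growth in $C$ dominate the sublinear one---together with the replacement of $y$ by $y_{\delta}$ (whose source changes sign inside $\Omega_{\delta}$ by (\ref{1})) are what make the two pointwise inequalities compatible in the delicate strip where $y$ alone would fail the subsolution test.
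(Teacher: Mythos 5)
Your proposal is correct and follows essentially the same route as the paper: the same pair $(\underline{u},\underline{v})=(C^{-1}y_{\delta},C^{-1}y_{\delta})$, $(\overline{u},\overline{v})=(Cy,Cy)$, verification of $(\mathrm{H.2})$ via the pointwise inequalities coming from (\ref{10}) and the sign-changing right-hand side in (\ref{1}) (negative source on $\Omega_{\delta}$, lower bounds of sublinear order in $C$ on $\Omega\setminus\overline{\Omega}_{\delta}$ using (\ref{9}) and (\ref{12})), then Theorem \ref{T2} and the boundary-value comparison $u_{2}=v_{2}=0$ versus $u_{1},v_{1}>0$ on $\partial\Omega$ to separate the two solutions. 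The only differences are cosmetic choices of constants in the intermediate estimates, which do not affect the argument.
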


\begin{proof}
We shall prove that functions $(\overline{u},\overline{v})=(Cy,Cy)$ and $(%
\underline{u},\underline{v})=(C^{-1}y_{\delta },C^{-1}y_{\delta })$ satisfy (%
\ref{c3}) and (\ref{c2}), respectively. With this aim, let $(u,v)\in
H^{1}(\Omega )\times H^{1}(\Omega )$ such that%
\begin{equation*}
C^{-1}y_{\delta }\leq u,v\leq Cy.
\end{equation*}%
Then, from (\ref{12}), (\ref{9}) and (\ref{alphabeta}), we get%
\begin{equation}
\begin{array}{l}
\frac{v^{\beta _{1}}}{(Cy)^{\alpha _{1}}}\leq (Cy)^{\beta _{1}-\alpha
_{1}}\leq (Ccd(x))^{\beta _{1}-\alpha _{1}}\leq (Cc|\Omega |)^{\beta
_{1}-\alpha _{1}}\text{ in }\Omega%
\end{array}
\label{23}
\end{equation}%
and%
\begin{equation}
\begin{array}{l}
\frac{(Cy_{\delta })^{\beta _{2}}}{u^{\alpha _{2}}}\leq \frac{(Cy)^{\beta
_{2}}}{(C^{-1}y_{\delta })^{\alpha _{2}}}\leq \frac{(Ccd(x))^{\beta _{2}}}{%
(C^{-1}\frac{y}{2})^{\alpha _{2}}}\leq \frac{(Ccd(x))^{\beta _{2}}}{(C^{-1}%
\frac{d(x)}{2c})^{\alpha _{2}}} \\ 
\leq (Cc)^{\beta _{2}+\alpha _{2}}2^{\alpha _{2}}d(x)^{\beta _{2}-\alpha
_{2}}\leq (Cc)^{\beta _{2}+\alpha _{2}}2^{\alpha _{2}}|\Omega |^{\beta
_{2}-\alpha _{2}}\text{ \ in }\Omega .%
\end{array}%
\end{equation}%
On the basis of (\ref{alphabeta}) and for $C>0$ sufficiently large we may
write%
\begin{equation*}
C>\max \{(Cc|\Omega |)^{\beta _{1}-\alpha _{1}},(Cc)^{\beta _{2}+\alpha
_{2}}2^{\alpha _{2}}|\Omega |^{\beta _{2}-\alpha _{2}}\}.
\end{equation*}%
Now, from (\ref{10}), a direct computation gives%
\begin{equation}
-\Delta (Cy)+Cy=C.  \label{21}
\end{equation}%
Then, test with $\varphi \in H_{+}^{1}(\Omega ),$ Green's formula \cite{CF}
together with (\ref{23})-(\ref{21}) imply that%
\begin{equation*}
\begin{array}{c}
\int_{\Omega }(\nabla (Cy)\nabla \varphi +Cy\varphi )\,\mathrm{d}%
x-\int_{\Omega }\frac{\partial (Cy)}{\partial \eta }\gamma _{0}(\varphi )%
\text{ \textrm{d}}\sigma =\int_{\Omega }C\varphi \text{ }\mathrm{d}x \\ 
\geq \int_{\Omega }\max \{\frac{v^{\beta _{1}}}{(Cy)^{\alpha _{1}}},\frac{%
(Cy)^{\beta _{2}}}{u^{\alpha _{2}}}\}\varphi \text{ }\mathrm{d}x,%
\end{array}%
\end{equation*}%
as desired.

Next, we show that $(\underline{u},\underline{v})=(C^{-1}y_{\delta
},C^{-1}y_{\delta })$ satisfy (\ref{c2}). From (\ref{1}) a direct
computation shows that%
\begin{equation}
-\Delta (C^{-1}y_{\delta })+C^{-1}y_{\delta }=\left\{ 
\begin{array}{ll}
C^{-1} & \text{in }\Omega \backslash \overline{\Omega }_{\delta }, \\ 
-C^{-1} & \text{otherwise.}%
\end{array}%
\right.   \label{21*}
\end{equation}%
By (\ref{12}), (\ref{9}) and (\ref{alphabeta}), we get 
\begin{equation}
\begin{array}{l}
\frac{v^{\beta _{1}}}{(C^{-1}y_{\delta })^{\alpha _{1}}}\geq
(C^{-1}y_{\delta })^{\beta _{1}-\alpha _{1}}\geq (C^{-1}\frac{y}{2})^{\beta
_{1}-\alpha _{1}}\geq (\frac{d(x)}{2cC})^{\beta _{1}-\alpha _{1}}\text{ in }%
\Omega 
\end{array}
\label{14}
\end{equation}%
and%
\begin{equation}
\begin{array}{l}
\frac{(C^{-1}y_{\delta })^{\beta _{2}}}{u^{\alpha _{2}}}\geq \frac{%
(C^{-1}y_{\delta })^{\beta _{2}}}{(Cy)^{\alpha _{2}}}\geq \frac{(C^{-1}\frac{%
y}{2})^{\beta _{2}}}{(Ccd(x))^{\alpha _{2}}}\geq \frac{(C^{-1}\frac{d(x)}{2c}%
)^{\beta _{2}}}{(Ccd(x))^{\alpha _{2}}}=\frac{d(x)^{\beta _{2}-\alpha _{2}}}{%
2^{\beta _{2}}(Cc)^{\beta _{2}+\alpha _{2}}}\text{ in }\Omega .%
\end{array}
\label{15}
\end{equation}%
In $\Omega \backslash \overline{\Omega }_{\delta },$ it holds%
\begin{equation*}
(\frac{d(x)}{2cC})^{\beta _{1}-\alpha _{1}}>(\frac{\delta }{2cC})^{\beta
_{1}-\alpha _{1}},
\end{equation*}%
\begin{equation*}
\frac{d(x)^{\beta _{2}-\alpha _{2}}}{2^{\beta _{2}}(Cc)^{\beta _{2}+\alpha
_{2}}}>\frac{\delta ^{\beta _{2}-\alpha _{2}}}{2^{\beta _{2}}(Cc)^{\beta
_{2}+\alpha _{2}}},
\end{equation*}%
because $d(x)>\delta $. Therefore, in view of (\ref{alphabeta}) and for $C>0$
large, we infer that%
\begin{equation}
\min \{(\frac{\delta }{2cC})^{\beta _{1}-\alpha _{1}},\frac{\delta ^{\beta
_{2}-\alpha _{2}}}{2^{\beta _{2}}(Cc)^{\beta _{2}+\alpha _{2}}}\}>C^{-1}%
\text{ \ in }\Omega \backslash \overline{\Omega }_{\delta }\text{.}
\label{26}
\end{equation}

In $\Omega _{\delta },$ it is easily seen that%
\begin{equation}
\min \{(\frac{d(x)}{2cC})^{\beta _{1}-\alpha _{1}},\frac{d(x)^{\beta
_{2}-\alpha _{2}}}{2^{\beta _{2}}(Cc)^{\beta _{2}+\alpha _{2}}}\}\geq
0>-C^{-1}.  \label{26*}
\end{equation}%
Test with $\varphi \in H_{+}^{1}(\Omega )$ in (\ref{21*}), gathering (\ref%
{26}), (\ref{26*}) together and using Green's formula \cite{CF}, we get%
\begin{equation*}
\begin{array}{c}
\int_{\Omega }(\nabla (C^{-1}y_{\delta })\nabla \varphi +C^{-1}y_{\delta
}\varphi )\,\mathrm{d}x-\int_{\Omega }\frac{\partial (C^{-1}y_{\delta })}{%
\partial \eta }\gamma _{0}(\varphi )\text{ \textrm{d}}\sigma  \\ 
=\int_{\Omega \backslash \overline{\Omega }_{\delta }}C^{-1}\varphi \text{ }%
\mathrm{d}x-\int_{\Omega _{\delta }}C^{-1}\varphi \text{ }\mathrm{d}x \\ 
\leq \int_{\Omega }\min \{\frac{v^{\beta _{1}}}{(C^{-1}y_{\delta })^{\alpha
_{1}}},\frac{(C^{-1}y_{\delta })^{\beta _{2}}}{u^{\alpha _{2}}}\}\varphi 
\text{ }\mathrm{d}x.%
\end{array}%
\end{equation*}%
showing that (\ref{c2}) is fulfilled for $(\underline{u},\underline{v}%
):=(C^{-1}y_{\delta },C^{-1}y_{\delta })$.

Consequently, owing to Theorem \ref{T2} we conclude that there exists a
solution $(u_{2},v_{2})\in C^{1,\tau }(\overline{\Omega })\times C^{1,\tau }(%
\overline{\Omega }),$ $\tau \in (0,1)$, of problem $(\mathrm{P})$ such that 
\begin{equation}
C^{-1}y_{\delta }\leq u_{2},v_{2}\leq Cy.  \label{111}
\end{equation}%
On the basis of (\ref{10}), (\ref{1}) and (\ref{111}), we have $%
u_{2},v_{2}=0 $ on $\partial \Omega $ while, in view of (\ref{13}) and (\ref%
{22}), the solution $(u_{1},v_{1})$ in Theorem \ref{T1} satisfies $%
u_{1},v_{1}>0$ on $\partial \Omega $. This shows that $(u_{2},v_{2})$ is a
second positive solution for $(\mathrm{P})$ with $u_{2}\neq u_{1}$ and $%
v_{2}\neq v_{1}$. The proof is completed.
\end{proof}

\section{A third solution}

\label{S4}

In what follows, for any constant $\Lambda >0,$ we denote by $\mathcal{B}%
_{\Lambda }$ the ball in $C^{1}(\overline{\Omega })\times C^{1}(\overline{%
\Omega })$ defined by%
\begin{equation*}
\mathcal{B}_{\Lambda }(0):=\left\{ (u,v)\in C^{1}(\overline{\Omega })\times
C^{1}(\overline{\Omega }):\Vert u\Vert _{C^{1}(\overline{\Omega })}+\Vert
v\Vert _{C^{1}(\overline{\Omega })}<\Lambda \right\} .
\end{equation*}%
Recall from \cite[Definition 1.60]{MMP} that nodal domain of $w\in
H^{1}(\Omega )$ is a connected component of $\Omega \backslash \{x\in \Omega
:w=0\}$. By \cite[Proposition 1.61]{MMP}, if $\Omega _{1}$ is a nodal domain
of $w\in H^{1}(\Omega )\cap C(\Omega )$ then 
\begin{equation}
\chi _{\Omega _{1}}w\in H^{1}(\Omega ),  \label{82}
\end{equation}%
where the symbol $\chi _{\Omega _{1}}$ stands for the characteristic
function of the set $\Omega _{1}$.

\subsection{An auxiliary result}

The next lemma is crucial in finding a third solution for problem $(\mathrm{P%
})$.

\begin{lemma}
\label{L9}For any constant $\lambda \in (0,\lambda _{1})$ the Neumann
problem 
\begin{equation}
-\Delta u+u=\lambda |u^{+}-\phi _{1}|\text{ in }\Omega ,\text{ \ \ }\frac{%
\partial u}{\partial \eta }=0\text{ \ on }\partial \Omega ,  \label{56}
\end{equation}%
does not admit solutions $u\in H^{1}(\Omega )$.
\end{lemma}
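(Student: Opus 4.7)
The plan is to argue by contradiction: assume $u \in H^{1}(\Omega)$ solves (\ref{56}) and derive an impossibility in three stages, with the third stage being delicate.

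Stage 1 (positivity of $u$). I would test the equation against $u^{-}\in H^{1}(\Omega)$. Using the standard identities $\nabla u \cdot \nabla u^{-} = -|\nabla u^{-}|^{2}$ and $u\, u^{-} = -(u^{-})^{2}$ a.e., the left-hand side of the weak formulation collapses to $-\|u^{-}\|_{1,2}^{2}$, whereas the right-hand side $\lambda \int_{\Omega} |u^{+}-\phi_{1}|\, u^{-}\, dx$ is manifestly nonnegative. Hence $u^{-}\equiv 0$, so $u \geq 0$ a.e.\ and the equation simplifies to $-\Delta u + u = \lambda |u-\phi_{1}|$.

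Stage 2 (upper bound $u \leq \phi_{1}$). Set $w = u - \phi_{1}$ and subtract the eigenvalue equation (\ref{34}) from the equation for $u$ to get $-\Delta w + w = \lambda |w| - \lambda_{1}\phi_{1}$ with homogeneous Neumann boundary data. Here I would invoke (\ref{82}): for any connected component $\Omega_{1}$ of $\{w>0\}$ the truncation $\chi_{\Omega_{1}} w$ lies in $H^{1}(\Omega)$ and is thus a legitimate test function. On $\Omega_{1}$ we have $|w|=w$, so the equation reads $-\Delta w + (1-\lambda) w = -\lambda_{1}\phi_{1}$ there. Plugging $u\equiv 1$ into the Rayleigh quotient (\ref{71}) shows $\lambda_{1}\leq 1$, hence $\lambda < \lambda_{1}\leq 1$. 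Testing against $\chi_{\Omega_{1}} w$ then yields
$$\int_{\Omega_{1}} |\nabla w|^{2}\,dx + (1-\lambda)\int_{\Omega_{1}} w^{2}\,dx = -\lambda_{1}\int_{\Omega_{1}} \phi_{1}\, w\,dx,$$
whose left-hand side is strictly positive while the right-hand side is strictly negative (since $\phi_{1}>0$ and $w>0$ on $\Omega_{1}$). This contradiction forces $\{w>0\}$ to be negligible, i.e., $u\leq \phi_{1}$ a.e.

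Stage 3 (closing the argument). With $0\leq u \leq \phi_{1}$, the equation becomes the linear Neumann problem $-\Delta u + (1+\lambda) u = \lambda \phi_{1}$. Pairing with $\phi_{1}$ and using (\ref{34}) in the form $\int(\nabla \phi_{1}\nabla v + \phi_{1} v)\,dx = \lambda_{1}\int \phi_{1} v\,dx$ for $v = u$ yields $(\lambda_{1}+\lambda)\int u\phi_{1}\,dx = \lambda\int \phi_{1}^{2}\,dx$, while pairing with $u$ gives $\|u\|_{1,2}^{2} + \lambda\|u\|_{2}^{2} = \lambda\int u\phi_{1}\,dx$. Chaining these with the Rayleigh bound $\|u\|_{1,2}^{2}\geq \lambda_{1}\|u\|_{2}^{2}$ and Cauchy--Schwarz forces every intermediate inequality to saturate, which by simplicity of $\lambda_{1}$ (see \cite{MP}) would make $u$ a positive multiple of $\phi_{1}$; I would then substitute $u = c\phi_{1}$ back into the original absolute-value equation (\ref{56}) and use the strict gap $\lambda < \lambda_{1}$ to rule it out. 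This last step is where I expect the real obstacle: the linearized equation by itself admits a candidate proportional to $\phi_{1}$, so the contradiction must genuinely exploit the nonsmooth structure of $|u^{+}-\phi_{1}|$ together with the simplicity and isolation of the first eigenvalue. Stages 1 and 2 are essentially bookkeeping; stage 3 is the crux.
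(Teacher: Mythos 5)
Your Stage 3 is not just ``the crux'' --- it is a gap that cannot be closed, because the saturating candidate you identify is an exact solution of (\ref{56}), not merely of the linearized problem. With $0\le u\le \phi_1$ the equation reads $-\Delta u+(1+\lambda)u=\lambda \phi_1$, and $u=c\phi_1$ with $c=\lambda/(\lambda_1+\lambda)\in(0,1)$ satisfies it; substituting this $u$ back into the original absolute-value equation, as you propose, gives $-\Delta(c\phi_1)+c\phi_1=c\lambda_1\phi_1$ on the left and $\lambda\,|(c\phi_1)^{+}-\phi_1|=\lambda(1-c)\phi_1$ on the right, and these coincide precisely because $c(\lambda_1+\lambda)=\lambda$, while $\partial(c\phi_1)/\partial\eta=0$ on $\partial\Omega$ is inherited from (\ref{34}). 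So the ``strict gap $\lambda<\lambda_1$'' rules nothing out: every inequality in your chain saturates at this function and no contradiction is available. (For the Neumann eigenvalue problem (\ref{34})--(\ref{71}) one has $\lambda_1=1$ with $\phi_1$ a positive constant, which makes the candidate completely explicit.) Your instinct that the nonsmooth term and the eigenvalue gap must be exploited at this point is exactly where the difficulty sits, but the step you defer to does not exist.

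For comparison, the paper's proof is organized differently from your Stages 1--2: it first proves $u\in L^\infty(\Omega)$ by a cut-off argument at a level $L>\Vert\phi_1\Vert_\infty$ combined with the Rayleigh quotient (\ref{71}), upgrades to $C^1(\overline{\Omega})$ regularity, and then eliminates the truncations $\chi_{\{u>\phi_1\}}u$ and $\chi_{[\phi_1/2,\phi_1]}u$ by testing and using $\lambda<\lambda_1$; but on the remaining region it only obtains, for $w=\chi_{[0,\phi_1/2]}u$, an inequality of the form $\int_\Omega(|\nabla w|^2+w^2)\,\mathrm{d}x\ \ge\ \lambda\int_\Omega w^2\,\mathrm{d}x$, which points in the wrong direction to contradict $w\neq 0$ --- and the explicit function above lives exactly in that region (since $c<1/2$ when $\lambda<\lambda_1$). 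So your proposal and the paper's argument stall at the same place, and the obstruction is structural rather than technical. A separate, smaller issue in your Stage 2: you invoke the nodal-domain truncation (\ref{82}) for $w=u-\phi_1$ knowing only $w\in H^1(\Omega)$, whereas (\ref{82}) requires $w\in H^1(\Omega)\cap C(\Omega)$; you would need the boundedness and regularity step (as in the paper) before any such truncation is legitimate.
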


\begin{proof}
By contradiction, let $u\in H^{1}(\Omega )$ be a solution of (\ref{56}) from
which it is easily seen that $0$ and $\phi _{1}$ do not fulfill (\ref{56}).
Thus, 
\begin{equation}
u\neq 0\text{ and }u\neq \phi _{1}.  \label{83}
\end{equation}%
First, we show that $u\in L^{\infty }(\Omega )$. To this end, let $\theta :%
\mathbb{R}
\rightarrow \lbrack 0,1]$ be a $C^{1}$ cut-off function such that 
\begin{equation*}
\theta (s)=\left\{ 
\begin{array}{l}
0\text{ if }s\leq 0, \\ 
1\text{ if }s\geq 1%
\end{array}%
\right. \text{ \ and }\phi ^{\prime }(s)\geq 0\text{ in }[0,1].
\end{equation*}%
Fix a constant $L>0$. Here, it does not involve any loss of generality by
assuming that $L>\left\Vert \phi _{1}\right\Vert _{\infty }$. Given $\delta
>0,$ we define $\theta _{\delta }(t)=\theta (\frac{t-L}{\delta })$ for all $%
t\in 
\mathbb{R}
$. It follows that \ 
\begin{equation}
\begin{array}{l}
\theta _{\delta }\circ z\in H^{1}(\Omega )\text{ \ and \ }\nabla (\theta
_{\delta }\circ z)=(\theta _{\delta }^{\prime }\circ z)\nabla z,\text{ \ for 
}z\in H^{1}(\Omega )\text{.}%
\end{array}
\label{5**}
\end{equation}%
Testing in (\ref{56}) with $(\theta _{\delta }\circ u)\varphi $, for $%
\varphi \in H^{1}(\Omega )$ and $\varphi \geq 0$ in $\Omega $, we get%
\begin{equation}
\begin{array}{l}
\int_{\Omega }(\nabla u\nabla ((\theta _{\delta }\circ u)\varphi )+u(\theta
_{\delta }\circ u)\varphi )\,\mathrm{d}x=\lambda \int_{\Omega }|u^{+}-\phi
_{1}|(\theta _{\delta }\circ u)\varphi \,\mathrm{d}x.%
\end{array}
\label{78**}
\end{equation}%
From (\ref{5**}) we have%
\begin{equation*}
\int_{\Omega }\nabla u\nabla ((\theta _{\delta }\circ u)\varphi )\,\mathrm{d}%
x=\int_{\Omega }(\theta _{\delta }^{\prime }\circ u)\varphi \,\mathrm{d}%
x+\int_{\Omega }\nabla u\nabla \varphi \text{ }(\theta _{\delta }\circ u)\,%
\mathrm{d}x.
\end{equation*}%
Since $\theta _{\delta }^{\prime }\circ u\geq 0,$ (\ref{78**}) becomes%
\begin{equation*}
\begin{array}{l}
\int_{\Omega }(\nabla u\nabla \varphi (\theta _{\delta }\circ u)+u(\theta
_{\delta }\circ u)\varphi )\,\mathrm{d}x\leq \lambda \int_{\Omega
}|u^{+}-\phi _{1}|(\theta _{\delta }\circ u)\varphi \,\mathrm{d}x.%
\end{array}%
\end{equation*}%
Letting $\delta \rightarrow 0$ we achieve 
\begin{equation*}
\begin{array}{l}
\int_{\{{u>L}\}}(\nabla {u}\nabla {\varphi }+u\varphi )\text{ }\mathrm{d}%
x\leq \lambda \int_{\{{u>L}\}}|u^{+}-\phi _{1}|\varphi \,\mathrm{d}x,%
\end{array}%
\end{equation*}%
for all $\varphi \in H^{1}(\Omega )$ with $\varphi \geq 0$ in $\Omega $.
Noting that $|u^{+}-\phi _{1}|<u$ for $u>L>0$, it follows that%
\begin{equation}
\begin{array}{l}
\int_{\{{u>L}\}}(\nabla {u}\nabla {\varphi }+u\varphi )\text{ }\mathrm{d}x%
\text{ }\mathrm{d}x\leq \lambda \int_{\{{u>L}\}}u\varphi \ \mathrm{d}x.%
\end{array}
\label{4**}
\end{equation}%
Set $\varphi =u$ in (\ref{4**}) it follows that 
\begin{equation}
\begin{array}{l}
\int_{\{{u>L}\}}(\left\vert \nabla {u}\right\vert ^{2}+u^{2})\text{ }\mathrm{%
d}x\leq \lambda \int_{\{{u>L}\}}u^{2}\ \mathrm{d}x,%
\end{array}%
\end{equation}%
that is,%
\begin{equation}
\begin{array}{l}
\int_{\Omega }(\left\vert \nabla ({u}\chi _{\{u>L\}})\right\vert ^{2}+({u}%
\chi _{\{u>L\}})^{2})\text{ }\mathrm{d}x\leq \lambda \int_{\Omega }(u\chi
_{\{u>L\}})^{2}\ \mathrm{d}x.%
\end{array}
\label{444}
\end{equation}%
By the first eigenvalue's characterization in (\ref{71}) one has%
\begin{equation*}
\begin{array}{l}
\lambda _{1}\int_{\Omega }(u\chi _{\{u>L\}})^{2}\text{ }\mathrm{d}x\leq
\int_{\Omega }(\left\vert \nabla ({u}\chi _{\{u>L\}})\right\vert ^{2}+({u}%
\chi _{\{u>L\}})^{2})\text{ }\mathrm{d}x,%
\end{array}%
\end{equation*}%
while, combined with (\ref{444}), entails 
\begin{equation*}
(\lambda _{1}-\lambda )\int_{\Omega }(\chi _{\{u>L\}}u)^{2}\text{ }\mathrm{d}%
x\leq 0.
\end{equation*}%
Since $\lambda <\lambda _{1,p},$ we conclude that%
\begin{equation*}
\chi _{\{u>L\}}u=0.
\end{equation*}%
whereas the weak comparison principle implies that 
\begin{equation}
u\geq 0\text{ \ in }\Omega .  \label{87**}
\end{equation}%
Thus 
\begin{equation}
0\leq u\leq L\text{ \ in }\Omega ,  \label{99**}
\end{equation}%
showing that $u\in L^{\infty }(\Omega )$ as desired. Consequently, the
regularity result in \cite[Theorem 1.2]{L} implies that $u\in C^{1}(%
\overline{\Omega })$. Moreover, due to (\ref{99**}), the proof of Lemma \ref%
{L9} is completed by showing that the interval $[0,L]$ does not contain any
solution of (\ref{56}). To this end, let us begin by analyzing the case when 
$u>\phi _{1}$. By (\ref{82}) and due to $C^{1}(\overline{\Omega })$%
-regularity of $u$, we can assert that $\chi _{\{u>\phi _{1}\}}u\in
H^{1}(\Omega )$. Testing by $\chi _{\{u>\phi _{1,p}\}}u$ in (\ref{56}) we get%
\begin{equation}
\begin{array}{l}
\int_{\Omega }(|\nabla (\chi _{\{u>\phi _{1}\}}u)|^{2}+(\chi _{\{u>\phi
_{1}\}}u)^{2})\text{ }\mathrm{d}x \\ 
=\int_{\Omega }(\nabla u\nabla (\chi _{\{u>\phi _{1}\}}u)+u\chi _{\{u>\phi
_{1}\}}u)\text{ }\mathrm{d}x \\ 
=\lambda \int_{\Omega }|u^{+}-\phi _{1}|\chi _{\{u>\phi _{1}\}}u\text{ }%
\mathrm{d}x\leq \lambda \int_{\Omega }|u-\phi _{1}|\chi _{\{u>\phi _{1}\}}u%
\text{ }\mathrm{d}x \\ 
\leq \lambda \int_{\Omega }\chi _{\{u>\phi _{1}\}}u^{2}\text{ }\mathrm{d}%
x=\lambda \int_{\Omega }(\chi _{\{u>\phi _{1,p}\}}u)^{2}\text{ }\mathrm{d}x.%
\end{array}
\label{89}
\end{equation}%
Recalling the first eigenvalue's characterization in (\ref{71}) it follows
that%
\begin{equation*}
\begin{array}{l}
\lambda _{1}\int_{\Omega }(\chi _{\{u>\phi _{1}\}}u)^{2}\text{ }\mathrm{d}%
x\leq \int_{\Omega }(|\nabla (\chi _{\{u>\phi _{1}\}}u)|^{2}+(\chi
_{\{u>\phi _{1}\}}u)^{2})\text{ }\mathrm{d}x,%
\end{array}%
\end{equation*}%
which, combined with (\ref{89}) entails 
\begin{equation*}
(\lambda _{1}-\lambda )\int_{\Omega }(\chi _{\{u>\phi _{1,p}\}}u)^{2}\text{ }%
\mathrm{d}x\leq 0.
\end{equation*}%
Since $\lambda <\lambda _{1},$ we infer that%
\begin{equation}
\chi _{\{u>\phi _{1}\}}u=0.  \label{81}
\end{equation}%
Next, we examine the situation when $u<\phi _{1}$ by processing separately
the cases $u\geq \frac{\phi _{1}}{2}$ and $u\leq \frac{\phi _{1}}{2}$.
Notice that 
\begin{equation}
\left\{ 
\begin{array}{l}
|u-\phi _{1}|\leq u\text{ if }u\in \lbrack \frac{\phi _{1}}{2},\phi _{1}] \\ 
\text{and \ }|u-\phi _{1}|\geq u\text{ if }u\in \lbrack 0,\frac{\phi _{1}}{2}%
].%
\end{array}%
\right.   \label{85}
\end{equation}%
Testing (\ref{56}) with $\chi _{\lbrack \frac{\phi _{1}}{2},\phi _{1}]}u\in
H^{1}(\Omega ),$ by (\ref{85}), we get%
\begin{equation}
\begin{array}{l}
\int_{\Omega }|\nabla (\chi _{\lbrack \frac{\phi _{1}}{2},\phi _{1}]}u)|^{2}%
\text{ }\mathrm{d}x=\int_{\Omega }(\nabla u\nabla (\chi _{\lbrack \frac{\phi
_{1}}{2},\phi _{1}]}u)+u\chi _{\lbrack \frac{\phi _{1}}{2},\phi _{1}]}u)%
\text{ }\mathrm{d}x \\ 
=\lambda \int_{\Omega }|u^{+}-\phi _{1}|\chi _{_{\lbrack \frac{\phi _{1}}{2}%
,\phi _{1}]}}u\text{ }\mathrm{d}x\leq \lambda \int_{\Omega }(\chi
_{_{\lbrack \frac{\phi _{1}}{2},\phi _{1}]}}u)^{2}\text{ }\mathrm{d}x.%
\end{array}
\label{84}
\end{equation}%
Then, on account of (\ref{71}), (\ref{84}) becomes%
\begin{equation*}
\lambda _{1}\int_{\Omega }(\chi _{_{\lbrack \frac{\phi _{1}}{2},\phi
_{1}]}}u)^{2}\text{ }\mathrm{d}x\leq \lambda \int_{\Omega }(\chi _{_{\lbrack 
\frac{\phi _{1}}{2},\phi _{1}]}}u)^{2}\text{ }\mathrm{d}x,
\end{equation*}%
leading to 
\begin{equation}
\chi _{_{\lbrack \frac{\phi _{1}}{2},\phi _{1}]}}u=0.  \label{86}
\end{equation}%
Consequently, on the basis of (\ref{83}), (\ref{99**}), (\ref{81}) and (\ref%
{86}), we inevitably must have 
\begin{equation}
\chi _{\lbrack 0,\frac{\phi _{1}}{2}]}u\neq 0.  \label{88}
\end{equation}%
Testing (\ref{56}) with $\chi _{\lbrack 0,\frac{\phi _{1}}{2}]}u\in
H^{1}(\Omega ),$ by (\ref{85}) and (\ref{71}), we obtain%
\begin{equation*}
\begin{array}{l}
\int_{\Omega }|\nabla (\chi _{\lbrack 0,\frac{\phi _{1}}{2}]}u)|^{2}\text{ }%
\mathrm{d}x=\int_{\Omega }|\nabla u\nabla (\chi _{\lbrack 0,\frac{\phi _{1}}{%
2}]}u)+u\chi _{\lbrack 0,\frac{\phi _{1}}{2}]}u)\text{ }\mathrm{d}x \\ 
=\lambda \int_{\Omega }|u-\phi _{1}|\chi _{\lbrack 0,\frac{\phi _{1}}{2}]}u%
\text{ }\mathrm{d}x\geq \lambda \int_{\Omega }(\chi _{\lbrack 0,\frac{\phi
_{1}}{2}]}u)^{2}\text{ }\mathrm{d}x,%
\end{array}%
\end{equation*}%
a contradiction with (\ref{88}) because $\lambda <\lambda _{1}$. Hence, 
\begin{equation}
\chi _{\lbrack 0,\frac{\phi _{1}}{2}]}u=0.  \label{90}
\end{equation}%
Finally, according to (\ref{83}), (\ref{99**}), (\ref{81}), (\ref{86}) and (%
\ref{90}), the desired conclusion follows.
\end{proof}

\subsection{Topological degree results}

\subsubsection{\textbf{Topological degree on }$\mathcal{B}_{L_{1}}(0)$}

For $t\in \lbrack 0,1]$ and for $\lambda \in (0,\lambda _{1})$, we shall
study the homotopy class of problem%
\begin{equation*}
(\mathrm{P}_{t})\qquad \left\{ 
\begin{array}{ll}
-\Delta u+u=t\frac{u^{\alpha _{1}}}{v^{\beta _{1}}}+(1-t)\lambda |u^{+}-\phi
_{1}| & \text{in }\Omega , \\ 
-\Delta v+v=t\frac{u^{\alpha _{2}}}{v^{\beta _{2}}}+(1-t)\lambda |v^{+}-\phi
_{1}| & \text{in }\Omega , \\ 
\frac{\partial u}{\partial \eta }=\frac{\partial v}{\partial \eta }=0\text{
on }\partial \Omega , & 
\end{array}%
\right.
\end{equation*}

The next result shows that solutions of problem $(\mathrm{P}_{t})$ cannot
occur outside the ball $\mathcal{B}_{L_{1}}(0)$ for a large constant $%
L_{1}>0 $.

\begin{lemma}
\label{L1}Assume (\ref{alphabeta}) holds.\ Then, there is a large constant $%
L_{1}>0$ such that every solution $(u,v)$ of $(\mathrm{P}_{t})$ satisfies $%
\left\Vert u\right\Vert _{C^{1}(\overline{\Omega })},\left\Vert v\right\Vert
_{C^{1}(\overline{\Omega })}<L_{1},$ for all $t\in \lbrack 0,1]$. Moreover,
if $t=0$ problem $(\mathrm{P}_{0})$ does not admit solutions.
\end{lemma}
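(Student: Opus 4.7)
My plan is to dispatch the two assertions separately, since they have different flavors.

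The nonexistence at $t=0$ is essentially immediate. Setting $t=0$ decouples the system into two independent copies of the scalar Neumann problem
$$-\Delta w + w = \lambda\,|w^+-\phi_1| \text{ in } \Omega,\qquad \frac{\partial w}{\partial \eta}=0 \text{ on } \partial \Omega,$$
which admits no $H^1(\Omega)$-solution by Lemma \ref{L9}. Hence $(\mathrm{P}_0)$ cannot have any solution.

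For the uniform $C^1$-bound I would argue by contradiction: assume there is a sequence of solutions $(u_n,v_n)$ of $(\mathrm{P}_{t_n})$ with $\|u_n\|_{C^1(\overline{\Omega})}+\|v_n\|_{C^1(\overline{\Omega})}\to\infty$, and pass to a subsequence with $t_n\to t^*\in[0,1]$. A first reduction turns the $C^{1}$-blowup into an $L^\infty$-blowup: by Lieberman's regularity \cite[Theorem 1.2]{L}, $\|u_n\|_{C^{1,\tau}}$ and $\|v_n\|_{C^{1,\tau}}$ are controlled by the $L^\infty$-norm of the right-hand sides of $(\mathrm{P}_{t_n})$, so if $M_n:=\|u_n\|_\infty+\|v_n\|_\infty$ were bounded then, given that only the factor $v_n^{-\beta_i}$ is singular, standard pointwise/weak arguments give bounds on the RHS, a contradiction. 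So we may assume $M_n\to\infty$ and rescale $\tilde u_n:=u_n/M_n,\ \tilde v_n:=v_n/M_n$, normalized so that $\|\tilde u_n\|_\infty+\|\tilde v_n\|_\infty=1$. Dividing the first equation of $(\mathrm{P}_{t_n})$ by $M_n$ yields
$$-\Delta \tilde u_n + \tilde u_n \;=\; t_n\,M_n^{\alpha_1-\beta_1-1}\,\frac{\tilde u_n^{\alpha_1}}{\tilde v_n^{\beta_1}} \;+\;(1-t_n)\lambda\,\bigl|\tilde u_n^+ - \phi_1/M_n\bigr|,$$
and similarly for $\tilde v_n$. The key structural observation is that, by (\ref{alphabeta}), the exponent $\alpha_1-\beta_1-1 \leq \alpha_1+\beta_1-1<0$, so the prefactor $M_n^{\alpha_1-\beta_1-1}\to 0$.

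Assuming the singular contribution can be shown to vanish in the distributional limit, a compactness argument (Lieberman regularity plus Arzelà–Ascoli) produces a $C^1$-limit $(\tilde u,\tilde v)$ solving
$$-\Delta \tilde u+\tilde u=(1-t^*)\lambda\,\tilde u^+,\qquad -\Delta \tilde v+\tilde v=(1-t^*)\lambda\,\tilde v^+,$$
with Neumann data, and with $\|\tilde u\|_\infty+\|\tilde v\|_\infty=1$. Testing the first equation with $\tilde u^-$ gives $\tilde u\ge 0$, so the equation reduces to the linear Neumann eigenvalue problem $-\Delta\tilde u+\tilde u=(1-t^*)\lambda\,\tilde u$. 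Since $(1-t^*)\lambda\leq\lambda<\lambda_1$, the variational characterization (\ref{71}) forces $\tilde u\equiv 0$, and likewise $\tilde v\equiv 0$, contradicting the normalization.

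I expect the main obstacle to be the passage to the limit in the singular term $\tilde u_n^{\alpha_1}/\tilde v_n^{\beta_1}$: even though its coefficient $M_n^{\alpha_1-\beta_1-1}$ decays, one must rule out that $\tilde v_n$ concentrates near zero on a set where the quotient explodes enough to overpower this decay. I would address this by testing the $\tilde v_n$-equation with a suitable negative power of $\tilde v_n+\varepsilon$ to extract an integral lower bound on $\tilde v_n$ (or equivalently an $L^1$-bound on $\tilde v_n^{-\beta_i}$), and then combining it with the strict inequalities in (\ref{alphabeta}) so that the product $M_n^{\alpha_i-\beta_i-1}\tilde u_n^{\alpha_i}\tilde v_n^{-\beta_i}$ still tends to zero in $L^1(\Omega)$. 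That controlled vanishing is what fuels the contradiction above and closes the argument.
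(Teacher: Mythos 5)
Your handling of the $t=0$ assertion is fine and coincides with the paper's: $(\mathrm{P}_0)$ decouples into two copies of the scalar problem excluded by Lemma \ref{L9}.

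The a priori bound, however, contains a genuine gap, and it sits exactly where you flag it yourself. First, your reduction from $C^1$-blow-up to $L^\infty$-blow-up already fails as stated: boundedness of $\|u_n\|_\infty,\|v_n\|_\infty$ does \emph{not} bound the right-hand sides of $(\mathrm{P}_{t_n})$, because of the factor $v_n^{-\beta_i}$; you would need a uniform positive lower bound on $v_n$, which you never produce. Second, and more seriously, after rescaling you write ``assuming the singular contribution can be shown to vanish in the distributional limit'', and the fix you sketch (testing with negative powers of $\tilde v_n+\varepsilon$ to get an $L^1$ bound on $\tilde u_n^{\alpha_i}\tilde v_n^{-\beta_i}$) cannot close the argument: an $L^1$ bound on the right-hand sides gives at best weak/distributional compactness, whereas your contradiction requires the normalization $\|\tilde u\|_\infty+\|\tilde v\|_\infty=1$ to survive the limit, i.e.\ uniform convergence, i.e.\ uniform $C^{1,\gamma}$ (or at least H\"older) estimates, which in turn need $L^\infty$ (or high $L^p$) control of the rescaled right-hand sides. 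With only $L^1$ control the normalized sequence could concentrate and converge a.e.\ to $0$, and no contradiction results.

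The missing ingredient is precisely what the paper's proof supplies: a pointwise lower bound for the normalized solutions. Since the rescaled functions satisfy $\Delta\mathcal{U}_n\le\mathcal{U}_n$, $\Delta\mathcal{V}_n\le\mathcal{V}_n$, V\'azquez's strong maximum principle together with the Neumann condition (Hopf's lemma at the boundary) gives $\mathcal{U}_n,\mathcal{V}_n>0$ on $\overline\Omega$, and the paper works with a bound $\mathcal{U}_n,\mathcal{V}_n\ge\rho>0$; combined with the decay of the prefactor coming from $\alpha_i-\beta_i-1<0$ (the same observation you make from (\ref{alphabeta})), this bounds the rescaled right-hand sides in $L^\infty$ by $1+\lambda$, Lieberman's estimates then give $C^{1,\gamma}$ compactness, the normalization persists in the $C^1$ limit, and the comparison with $\lambda<\lambda_1$ via (\ref{71}) produces the contradiction (the paper normalizes by the $C^1$ norms rather than by $L^\infty$, but that difference is inessential). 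Until you establish such a uniform lower bound, or some other mechanism yielding $L^\infty$-type control of $\tilde u_n^{\alpha_i}/\tilde v_n^{\beta_i}$ after rescaling, your blow-up argument does not go through.
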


\begin{proof}
By contradiction suppose that for every $n\in 
\mathbb{N}
,$ there exist $t_{n}\in \lbrack 0,1]$ and a solution $(u_{n},v_{n})$ of $(%
\mathrm{P}_{t_{n}})$ such that%
\begin{equation*}
t_{n}\rightarrow t\in \lbrack 0,1]\text{ \ and \ }\Vert u_{n}\Vert _{C^{1}(%
\overline{\Omega })},\Vert v_{n}\Vert _{C^{1}(\overline{\Omega }%
)}\rightarrow \infty \text{ \ as }n\rightarrow \infty .
\end{equation*}%
Thus, $(\mathrm{P}_{t_{n}})$ is reads as%
\begin{equation*}
\left\{ 
\begin{array}{l}
-\Delta u_{n}+u_{n}=t_{n}\frac{u_{n}^{\alpha _{1}}}{v_{n}^{\beta _{1}}}%
+(1-t_{n})\lambda |u_{n}^{+}-\phi _{1}|\text{ in }\Omega , \\ 
-\Delta v_{n}+v_{n}=t_{n}\frac{u_{n}^{\alpha _{2}}}{v_{n}^{\beta _{2}}}%
+(1-t_{n})\lambda |v_{n}^{+}-\phi _{1}|\text{ in }\Omega , \\ 
\frac{\partial u_{n}}{\partial \eta }=\frac{\partial v_{n}}{\partial \eta }=0%
\text{ on }\partial \Omega ,\text{ \ for all }n\in 
\mathbb{N}
.%
\end{array}%
\right. 
\end{equation*}%
Set%
\begin{equation}
\begin{array}{c}
(\theta _{n},\hat{\theta}_{n}):=(\Vert u_{n}\Vert _{C^{1}(\overline{\Omega }%
)},\Vert v_{n}\Vert _{C^{1}(\overline{\Omega })})\rightarrow \infty \text{
as }n\rightarrow \infty 
\end{array}
\label{3}
\end{equation}%
and denote 
\begin{equation}
\left\{ 
\begin{array}{l}
(\mathcal{U}_{n},\mathcal{V}_{n}):=(\frac{u_{n}}{\theta _{n}},\frac{v_{n}}{%
\hat{\theta}_{n}})\in C^{1}(\overline{\Omega })\times C^{1}(\overline{\Omega 
}), \\ 
\text{with }\Vert \mathcal{U}_{n}\Vert _{C^{1}(\overline{\Omega })}=\Vert 
\mathcal{V}_{n}\Vert _{C^{1}(\overline{\Omega })}=1\text{ for all }n\in 
\mathbb{N}
.%
\end{array}%
\right.   \label{25}
\end{equation}%
$(\mathrm{P}_{t_{n}})$ results in 
\begin{equation}
\left\{ 
\begin{array}{l}
-\Delta \mathcal{U}_{n}+\mathcal{U}_{n}=\frac{1}{\theta _{n}}\left( t_{n}%
\frac{u_{n}^{\alpha _{1}}}{v_{n}^{\beta _{1}}}+(1-t_{n})\lambda
|u_{n}^{+}-\phi _{1}|\right) \text{ in }\Omega , \\ 
-\Delta \mathcal{V}_{n}+\mathcal{V}_{n}=\frac{1}{\hat{\theta}_{n}}\left(
t_{n}\frac{u_{n}^{\alpha _{2}}}{v_{n}^{\beta _{2}}}+(1-t_{n})\lambda
|v_{n}^{+}-\phi _{1}|\right) \text{ in }\Omega , \\ 
\frac{\partial \mathcal{U}_{n}}{\partial \eta }=\frac{\partial \mathcal{V}%
_{n}}{\partial \eta }=0\text{ on }\partial \Omega .%
\end{array}%
\right.   \label{13*}
\end{equation}%
Note that (\ref{13*}) implies that%
\begin{equation*}
\Delta \mathcal{U}_{n}(x)\leq \mathcal{U}_{n}(x)\text{ \ and \ }\Delta 
\mathcal{V}_{n}(x)\leq \mathcal{V}_{n}(x)\text{ for a.e. }x\in \Omega .
\end{equation*}%
Hence, by virtue of the strong maximum principle in \cite{V}, we obtain $%
\mathcal{U}_{n}(x),\mathcal{V}_{n}(x)>0$ for all $x\in \Omega $. Suppose
that for some $x_{0},x_{0}^{\prime }\in \partial \Omega $ we have $\mathcal{U%
}_{n}(x_{0})=\mathcal{V}_{n}(x_{0}^{\prime })=0$. Then, again by \cite{V} it
follows that $\frac{\partial \mathcal{U}_{n}}{\partial \eta }(x_{0}),\frac{%
\partial \mathcal{V}_{n}}{\partial \eta }(x_{0}^{\prime })<0,$ which
contradicts (\ref{13}). This proves that $\mathcal{U}_{n}(x),\mathcal{V}%
_{n}(x)>0$ for all $x\in \overline{\Omega }$, that is, $\mathcal{U}_{n},%
\mathcal{V}_{n}\in int\mathcal{C}_{+}^{1}(\overline{\Omega })$. Thus, there
exists a constant $\rho >0$ such that 
\begin{equation}
\begin{array}{l}
\mathcal{U}_{n},\mathcal{V}_{n}>\rho \text{ \ on }\overline{\Omega }.%
\end{array}
\label{5}
\end{equation}%
By (\ref{5}), (\ref{alphabeta}), (\ref{25}) and in view of (\ref{3}), one has%
\begin{equation}
\begin{array}{c}
\left\vert \frac{1}{\theta _{n}}\left( t_{n}\frac{u_{n}^{\alpha _{1}}}{%
v_{n}^{\beta _{1}}}+(1-t_{n})\lambda |u_{n}^{+}-\phi _{1}|\right)
\right\vert  \\ 
=\frac{t_{n}}{\theta _{n}^{1+\beta _{1}-\alpha _{1}}}\frac{\mathcal{U}%
_{n}^{\alpha _{1}}}{\mathcal{V}_{n}^{\beta _{1}}}+(1-t_{n})\lambda |\mathcal{%
U}_{n}-\frac{\phi _{1}}{\theta _{n}}| \\ 
\leq \frac{t_{n}}{\theta _{n}^{1+\beta _{1}-\alpha _{1}}\rho ^{\beta _{1}}}%
\mathcal{U}_{n}^{\alpha _{1}}+(1-t_{n})\lambda \mathcal{U}_{n}\leq \mathcal{U%
}_{n}^{\alpha _{1}}+\lambda \mathcal{U}_{n} \\ 
\leq \Vert \mathcal{U}_{n}\Vert _{C^{1}(\overline{\Omega })}^{\alpha
_{1}}+\lambda \Vert \mathcal{U}_{n}\Vert _{C^{1}(\overline{\Omega }%
)}=1+\lambda \text{ \ in }\Omega ,%
\end{array}
\label{16}
\end{equation}%
and%
\begin{equation}
\begin{array}{l}
\left\vert \frac{1}{\hat{\theta}_{n}}\left( t_{n}\frac{u_{n}^{\alpha _{2}}}{%
v_{n}^{\beta _{2}}}+(1-t_{n})\lambda |v_{n}^{+}-\phi _{1}|\right)
\right\vert  \\ 
=\frac{t_{n}}{\hat{\theta}_{n}^{1+\beta _{2}-\alpha _{2}}}\frac{\mathcal{U}%
_{n}^{\alpha _{2}}}{\mathcal{V}_{n}^{\beta _{2}}}+(1-t_{n})\lambda |\mathcal{%
V}_{n}-\frac{\phi _{1}}{\hat{\theta}_{n}}| \\ 
\leq \frac{t_{n}}{\hat{\theta}_{n}^{1+\beta _{2}-\alpha _{2}}\rho ^{\beta
_{2}}}\mathcal{U}_{n}^{\alpha _{2}}+(1-t_{n})\lambda \mathcal{V}_{n}\leq 
\mathcal{U}_{n}^{\alpha _{2}}+\lambda \mathcal{V}_{n} \\ 
\leq \Vert \mathcal{U}_{n}\Vert _{C^{1}(\overline{\Omega })}^{\alpha
_{2}}+\lambda \Vert \mathcal{V}_{n}\Vert _{C^{1}(\overline{\Omega }%
)}=1+\lambda \text{ \ in }\Omega .%
\end{array}%
\end{equation}%
Then, owing to \cite{L} together with (\ref{25}), one derives that $\mathcal{%
U}{\normalsize _{n}}$ and $\mathcal{V}_{n}$ are bounded in $C^{1,\gamma }(%
\overline{\Omega })$ for certain $\gamma \in (0,1)$. The compactness of the
embedding $C^{1,\gamma }(\overline{\Omega })\subset C^{1}(\overline{\Omega })
$ implies 
\begin{equation*}
\mathcal{U}{\normalsize _{n}}\rightarrow \mathcal{U}\ \ \text{and \ }%
\mathcal{V}{\normalsize _{n}}\rightarrow \mathcal{V}\text{ \ in }C^{1}(%
\overline{\Omega }),
\end{equation*}%
with%
\begin{equation}
\begin{array}{l}
\mathcal{U},\mathcal{V}>\rho \text{ \ on }\overline{\Omega }.%
\end{array}
\label{5*}
\end{equation}%
Passing to the limit in (\ref{13*}) as $n\rightarrow \infty $ one gets%
\begin{equation}
\left\{ 
\begin{array}{ll}
-\Delta \mathcal{U}+\mathcal{U}=(1-t)\lambda \mathcal{U} & \text{in }\Omega 
\\ 
-\Delta \mathcal{V}+\mathcal{V}=(1-t)\lambda \mathcal{V} & \text{in }\Omega 
\\ 
\frac{\partial \mathcal{U}}{\partial \eta }=\frac{\partial \mathcal{V}}{%
\partial \eta }=0\text{ } & \text{on }\partial \Omega 
\end{array}%
\right. ,\text{ \ for }t\in \lbrack 0,1].  \label{6}
\end{equation}%
If $t=1$ then $\mathcal{U}=\mathcal{V}=0$ which contradicts (\ref{5*}).
Assume $t\in \lbrack 0,1).$ Since $(1-t)\lambda <\lambda <\lambda _{1},$ the
unique solution of (\ref{6}) is $(\mathcal{U},\mathcal{V})=0$, which is
impossible in view of (\ref{5*}). The claim is thus proved.
\end{proof}

\begin{proposition}
\label{P1}Assume (\ref{alphabeta}) holds.\ With a constant $L_{1}>0$, let
the homotopy $\mathcal{H}$ on $\left[ 0,1\right] \times \overline{\mathcal{B}%
}_{L_{1}}(0)$ defined by%
\begin{equation*}
\mathcal{H}(t,u,v)=(\mathcal{H}_{1}(t,u,v),\mathcal{H}_{2}(t,u,v)),
\end{equation*}%
with 
\begin{equation*}
\mathcal{H}_{1}(t,u,v)=u-(-\Delta +I)^{-1}(t\frac{u^{\alpha _{1}}}{v^{\beta
_{1}}}+(1-t)\lambda |u^{+}-\phi _{1}|),
\end{equation*}%
and%
\begin{equation*}
\mathcal{H}_{2}(t,u,v)=v-(-\Delta +I)^{-1}(t\frac{u^{\alpha _{2}}}{v^{\beta
_{2}}}+(1-t)\lambda |v^{+}-\phi _{1}|),
\end{equation*}%
where $\overline{\mathcal{B}}_{L_{1}}(0)$ is the closure of $\mathcal{B}%
_{L_{1}}(0)$ in $C^{1}(\overline{\Omega })\times C^{1}(\overline{\Omega })$.

If $L_{1}>0$ is sufficiently large, then the Leray-Schauder topological
degree $\deg (\mathcal{H}(t,\cdot ,\cdot ),\mathcal{B}_{L_{1}}(0),0)$ is
well defined for every $t\in \lbrack 0,1],$ and it holds%
\begin{equation}
\deg \left( \mathcal{H}(1,\cdot ,\cdot ),\mathcal{B}_{L_{1}}(0),0\right) =0.
\label{35}
\end{equation}
\end{proposition}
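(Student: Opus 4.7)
The plan is to apply homotopy invariance of the Leray-Schauder degree along the homotopy $\mathcal{H}(t,\cdot,\cdot)$, relying on Lemmas \ref{L1} and \ref{L9} for the two ingredients this requires: absence of zeros on the boundary of the ball, and computability of the degree at $t=0$.

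\textbf{Step 1 (degree well defined).} I would first observe that for each $t\in[0,1]$, the map $\mathcal{H}(t,\cdot,\cdot)$ is a compact perturbation of the identity on $\overline{\mathcal{B}}_{L_1}(0)$. Indeed, on any $C^1$-bounded set the right-hand sides $t\,u^{\alpha_1}/v^{\beta_1}+(1-t)\lambda|u^+-\phi_1|$ and its analogue for the second equation are bounded in $L^\infty(\Omega)$ (interpreting the singular nonlinearity on the subset where solutions may actually lie, see the remark below), so Lieberman's regularity \cite{L} shows that $(-\Delta+I)^{-1}$ with Neumann conditions sends them into a bounded set of $C^{1,\tau}(\overline{\Omega})$, whence compactness into $C^1(\overline{\Omega})$. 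Next, I would invoke Lemma \ref{L1}: every fixed point of $\mathcal{H}(t,\cdot,\cdot)$ is a solution of $(\mathrm{P}_t)$, and hence satisfies $\|u\|_{C^1(\overline{\Omega})}+\|v\|_{C^1(\overline{\Omega})}<L_1$. Consequently $\mathcal{H}(t,u,v)\neq 0$ on $\partial\mathcal{B}_{L_1}(0)$ uniformly in $t\in[0,1]$, so $\deg(\mathcal{H}(t,\cdot,\cdot),\mathcal{B}_{L_1}(0),0)$ is defined and, by homotopy invariance,
\begin{equation*}
\deg(\mathcal{H}(1,\cdot,\cdot),\mathcal{B}_{L_1}(0),0)=\deg(\mathcal{H}(0,\cdot,\cdot),\mathcal{B}_{L_1}(0),0).
\end{equation*}

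\textbf{Step 2 (degree at $t=0$).} At $t=0$ the system decouples: $\mathcal{H}(0,u,v)=0$ is equivalent to $u$ and $v$ each solving the scalar Neumann problem $-\Delta w+w=\lambda|w^+-\phi_1|$ in $\Omega$, $\partial w/\partial\eta=0$ on $\partial\Omega$. Since $\lambda\in(0,\lambda_1)$, Lemma \ref{L9} tells us that this problem has no solution in $H^1(\Omega)$. Hence $\mathcal{H}(0,\cdot,\cdot)^{-1}(\{0\})=\emptyset$ in $\overline{\mathcal{B}}_{L_1}(0)$ (in fact in all of $C^1(\overline{\Omega})\times C^1(\overline{\Omega})$), and the existence property of the degree forces
\begin{equation*}
\deg(\mathcal{H}(0,\cdot,\cdot),\mathcal{B}_{L_1}(0),0)=0.
\end{equation*}
Combining with Step 1 yields $\deg(\mathcal{H}(1,\cdot,\cdot),\mathcal{B}_{L_1}(0),0)=0$, which is \eqref{35}.

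The main technical obstacle I anticipate is not the degree-theoretic skeleton, which is standard once Lemmas \ref{L1} and \ref{L9} are in hand, but rather the proper definition of $\mathcal{H}$ on all of $\overline{\mathcal{B}}_{L_1}(0)$: the nonlinearities $u^{\alpha_i}/v^{\beta_i}$ are only meaningful when the denominator is bounded away from $0$, whereas the ball contains functions that change sign or vanish. The standard remedy is to replace $u^{\alpha_i}/v^{\beta_i}$ by a truncated version (e.g., $(u^+)^{\alpha_i}/((v^+)^{\beta_i}+\varepsilon)$ with $\varepsilon\downarrow 0$, or the truncations $\mathrm{T}_1,\mathrm{T}_2$ used in the proof of Theorem \ref{T2}) and then to verify that any zero of the modified $\mathcal{H}$ is automatically positive by the strong maximum principle, so that the modification does not alter the zero set nor the value of the degree. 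With this fix in place, the Leray-Schauder calculus above goes through verbatim.
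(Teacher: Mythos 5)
Your proposal is correct and follows essentially the same route as the paper: Lemma \ref{L1} supplies the uniform $C^1$ a priori bound so that $\mathcal{H}(t,\cdot,\cdot)$ has no zeros on $\partial\mathcal{B}_{L_1}(0)$, the compactness of $(-\Delta+I)^{-1}$ gives a compact perturbation of the identity, and the non-solvability of the decoupled problem at $t=0$ (Lemma \ref{L9}) forces $\deg(\mathcal{H}(0,\cdot,\cdot),\mathcal{B}_{L_1}(0),0)=0$, whence \eqref{35} by homotopy invariance. Your closing remark about making $\mathcal{H}$ well defined on the whole ball (via truncation, checked not to change the zero set) addresses a point the paper passes over more lightly — it only notes that solutions of $(\mathrm{P}_t)$ stay above a positive constant $\hat{\rho}$ — so that extra care is welcome but does not alter the argument.
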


\begin{proof}
According to Lemma \ref{L1}, an analysis similar to the one showing (\ref{5}%
) implies that there exists a constant $\hat{\rho}>0$ such that all
solutions $(u,v)\in C^{1}(\overline{\Omega })\times C^{1}(\overline{\Omega }%
) $ of $(\mathrm{P}_{t})$ satisfy $u,v>\hat{\rho}$ on $\overline{\Omega }$,
for all $t\in \lbrack 0,1]$. Hence, the right hand-side of the equations in $%
(\mathrm{P}_{t})$ are continuous almost everywhere in $\Omega $ and
therefore, homotopies $\mathcal{H}_{1}$,$\mathcal{H}_{2}$ are well defined.
Moreover, $\mathcal{H}_{1},\mathcal{H}_{2}:\left[ 0,1\right] \times C^{1}(%
\overline{\Omega })\times C^{1}(\overline{\Omega })\rightarrow C(\overline{%
\Omega })$ are completely continuous. This is due to the compactness of the
operators $(-\Delta +I)^{-1}:C(\overline{\Omega })\rightarrow C^{1}(%
\overline{\Omega })$. Hence, $(u,v)\in \mathcal{B}_{L_{1}}(0)$ is a solution
for $(\mathrm{P})$ if, and only if, 
\begin{equation*}
\begin{array}{c}
(u,v)\in \mathcal{B}_{L_{1}}(0)\,\,\,\text{and}\,\,\,\mathcal{H}(1,u,v)=0.%
\end{array}%
\end{equation*}%
Clearly, the previous Lemma \ref{L1} ensures that solutions of $(\mathrm{P}%
_{t})$ lie in $\mathcal{B}_{L_{1}}(0)$. Furthermore, since $(\mathrm{P}_{t})$
has no solutions for $t=0$, one can deduce that%
\begin{equation*}
\deg \left( \mathcal{H}(0,\cdot ,\cdot ),\mathcal{B}_{L_{1}}(0),0\right) =0.
\end{equation*}%
Consequently, through the homotopy invariance property, we conclude that (%
\ref{35}) holds true.
\end{proof}

\subsubsection{\textbf{Topological degree on }$\mathcal{B}_{L_{2}}(0)$%
\textbf{.}}

Let us define the problem%
\begin{equation*}
(\mathrm{\tilde{P}}_{t})\qquad \left\{ 
\begin{array}{ll}
-\Delta u+u=t\frac{u^{\alpha _{1}}}{v^{\beta _{1}}}+(1-t)\lambda (u-\phi
_{1})^{+} & \text{in }\Omega , \\ 
-\Delta v+v=t\frac{u^{\alpha _{2}}}{v^{\beta _{2}}}+(1-t)\lambda (v-\phi
_{1})^{+} & \text{in }\Omega , \\ 
\frac{\partial u}{\partial \eta }=\frac{\partial v}{\partial \eta }=0\text{
on }\partial \Omega , & 
\end{array}%
\right.
\end{equation*}%
for $t\in \lbrack 0,1].$

\begin{lemma}
\label{L2}Under assumption (\ref{alphabeta}), every solution $(u,v)$ of $(%
\mathrm{\tilde{P}}_{t}),$ for $t\in \lbrack 0,1]$, is bounded in $C^{1}(%
\overline{\Omega })\times C^{1}(\overline{\Omega })$ and there exists a
constant $L_{2}>0$ such that%
\begin{equation}
\left\Vert u\right\Vert _{C^{1}(\overline{\Omega })},\left\Vert v\right\Vert
_{C^{1}(\overline{\Omega })}<L_{2}.  \label{28}
\end{equation}%
Moreover, problem $(\mathrm{\tilde{P}}_{0})$ admits only a trivial solution.
\end{lemma}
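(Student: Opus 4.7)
The plan is to follow the blueprint of Lemma \ref{L1} with two modifications: the nonlinearity $(u-\phi_1)^+$ replaces $|u^+-\phi_1|$, and I must additionally show that $(\tilde{\mathrm{P}}_0)$ has only the trivial solution.

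To establish \eqref{28}, I argue by contradiction. Suppose that for every $n$ there exist $t_n\in[0,1]$ and a solution $(u_n,v_n)$ of $(\tilde{\mathrm{P}}_{t_n})$ with $\theta_n:=\|u_n\|_{C^1(\overline{\Omega})}$ and $\hat\theta_n:=\|v_n\|_{C^1(\overline{\Omega})}$ tending to infinity. Rescale via $(\mathcal{U}_n,\mathcal{V}_n):=(u_n/\theta_n,v_n/\hat\theta_n)$, so that $\|\mathcal{U}_n\|_{C^1(\overline{\Omega})}=\|\mathcal{V}_n\|_{C^1(\overline{\Omega})}=1$. Exactly as in the derivation of \eqref{5}, the strong maximum principle of Vazquez \cite{V} combined with the Neumann condition yields $\mathcal{U}_n,\mathcal{V}_n\geq\rho$ on $\overline{\Omega}$ for some $\rho>0$. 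Using $\alpha_i+\beta_i<1$ (so $1-\alpha_i>\beta_i>0$), the rescaled singular terms $t_nu_n^{\alpha_i}/(\theta_n v_n^{\beta_i})$ and $t_nu_n^{\alpha_i}/(\hat\theta_n v_n^{\beta_i})$ are dominated as in \eqref{16} and in fact vanish in the limit because a positive power of $\theta_n$ or $\hat\theta_n$ appears in the denominator while $\mathcal{U}_n^{\alpha_i}/\mathcal{V}_n^{\beta_i}$ stays bounded thanks to the lower bound on $\mathcal{V}_n$. The remaining term $(1-t_n)\lambda(u_n-\phi_1)^+/\theta_n=(1-t_n)\lambda(\mathcal{U}_n-\phi_1/\theta_n)^+$ converges uniformly to $(1-t)\lambda\mathcal{U}$ because $\phi_1/\theta_n\to 0$ uniformly and $\mathcal{U}_n\geq\rho$ keeps the argument of the positive part strictly positive. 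Lieberman's regularity \cite{L} supplies uniform $C^{1,\gamma}$-bounds, so along a subsequence $(\mathcal{U}_n,\mathcal{V}_n)\to(\mathcal{U},\mathcal{V})$ in $C^1(\overline{\Omega})\times C^1(\overline{\Omega})$ with $\mathcal{U},\mathcal{V}\geq\rho$. Passing to the limit yields
\begin{equation*}
-\Delta\mathcal{U}+\mathcal{U}=(1-t)\lambda\mathcal{U},\qquad -\Delta\mathcal{V}+\mathcal{V}=(1-t)\lambda\mathcal{V}\quad\text{in }\Omega,
\end{equation*}
with Neumann data. Since $(1-t)\lambda<\lambda_1$, the variational characterization \eqref{71} forces $\mathcal{U}\equiv\mathcal{V}\equiv 0$, contradicting $\rho>0$.

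For the nonexistence at $t=0$, let $(u,v)\in H^1(\Omega)\times H^1(\Omega)$ solve $(\tilde{\mathrm{P}}_0)$. After upgrading to $u,v\in C^1(\overline{\Omega})$ via Moser iteration and Lieberman's regularity, I test the $u$-equation $-\Delta u+u=\lambda(u-\phi_1)^+$ with the admissible function $\chi_{\{u>\phi_1\}}u\in H^1(\Omega)$, legitimate by \eqref{82}. On $\{u>\phi_1\}$ one has $(u-\phi_1)^+=u-\phi_1\leq u$ since $\phi_1\geq 0$, so the chain of inequalities together with \eqref{71} yields
\begin{equation*}
\lambda_1\int_\Omega(\chi_{\{u>\phi_1\}}u)^2\,dx\leq\int_\Omega\bigl(|\nabla(\chi_{\{u>\phi_1\}}u)|^2+(\chi_{\{u>\phi_1\}}u)^2\bigr)\,dx\leq\lambda\int_\Omega(\chi_{\{u>\phi_1\}}u)^2\,dx.
\end{equation*}
Since $\lambda<\lambda_1$, I conclude $\chi_{\{u>\phi_1\}}u=0$, i.e.\ $u\leq\phi_1$ a.e., so $(u-\phi_1)^+\equiv 0$ and the equation degenerates to $-\Delta u+u=0$; testing with $u$ forces $u\equiv 0$, and the same reasoning yields $v\equiv 0$.

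The main obstacle is the bookkeeping of the two possibly different blow-up rates $\theta_n$ and $\hat\theta_n$ in the rescaled Gierer-Meinhardt nonlinearity, and verifying that each singular quotient vanishes in the limit. This step rests crucially on the strict inequalities $\alpha_i+\beta_i<1$ of \eqref{alphabeta} together with the uniform positivity of $\mathcal{U}_n$ and $\mathcal{V}_n$; everything else is a faithful adaptation of the arguments already deployed in Lemmas \ref{L1} and \ref{L9}.
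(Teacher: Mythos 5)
Your proposal is correct and follows essentially the same route as the paper: the a priori bound \eqref{28} is obtained by the same blow-up/rescaling contradiction as in Lemma \ref{L1} (using $(s-\phi_1)^+\leq|s^+-\phi_1|$ and the limit system \eqref{6} with $(1-t)\lambda<\lambda_1$), and the $t=0$ case reduces to the decoupled problem where $\lambda<\lambda_1$ forces triviality. The only difference is that you spell out the $t=0$ uniqueness via a Lemma \ref{L9}-style truncation test, which the paper merely asserts, and you track the two blow-up rates $\theta_n,\hat\theta_n$ slightly more explicitly; neither changes the substance of the argument.
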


\begin{proof}
Arguing as in the proof of Lemma \ref{L1} we show that the solution set of
problem $(\mathrm{\tilde{P}}_{t})$ is bounded in $C^{1}(\overline{\Omega }%
)\times C^{1}(\overline{\Omega })$ uniformly with respect to $t\in \lbrack
0,1]$. Indeed, by contradiction, suppose that for every positive integer $n$%
, there exist $t_{n}\in \lbrack 0,1]$ and a solution $(u_{n},v_{n})$ of $(%
\mathrm{\tilde{P}}_{t_{n}})$ such that $t_{n}\rightarrow t\in \lbrack 0,1]$\
and $\Vert u_{n}\Vert _{C^{1}(\overline{\Omega })},\Vert v_{n}\Vert _{C^{1}(%
\overline{\Omega })}\rightarrow \infty $ as $n\rightarrow \infty .$
Observing that%
\begin{equation*}
(u_{n}-\phi _{1})^{+}\leq |u_{n}^{+}-\phi _{1}|\text{ \ and \ }(v_{n}-\phi
_{1})^{+}\leq |v_{n}^{+}-\phi _{1}|,
\end{equation*}%
the reasoning developed in the proof of Lemma \ref{L1} based on assumptions (%
\ref{alphabeta}) leads to the existence of a positive function $(\mathcal{%
\bar{U}},\mathcal{\bar{V}})\in C^{1}(\overline{\Omega })\times C^{1}(%
\overline{\Omega })$ solving the decoupled system (\ref{6}). This leads to
the same contradiction as in the proof of Lemma \ref{L1}, thereby completing
the proof of (\ref{28}).

Finally, for $t=0$, $(\mathrm{\tilde{P}}_{0})$ is expressed as a decoupled
system:%
\begin{equation*}
\left\{ 
\begin{array}{ll}
-\Delta u+u=\lambda (u-\phi _{1})^{+} & \text{in }\Omega , \\ 
-\Delta v+v=\lambda (v-\phi _{1})^{+} & \text{in }\Omega , \\ 
\frac{\partial u}{\partial \eta }=\frac{\partial v}{\partial \eta }=0\text{ }
& \text{on }\partial \Omega ,%
\end{array}%
\right.
\end{equation*}%
which, since $\lambda \in (0,\lambda _{1})$, admits only the trivial
solution $(u,v)=(0,0)$.
\end{proof}

\begin{proposition}
\label{P2}Assume (\ref{alphabeta}) holds.\ With a constant $L_{2}>0$, let
the homotopy $\mathcal{N}$ on $\left[ 0,1\right] \times \overline{\mathcal{B}%
}_{L_{2}}(0)$ defined by%
\begin{equation*}
\mathcal{N}(t,u,v)=(\mathcal{N}_{1}(t,u,v),\mathcal{N}_{2}(t,u,v)),
\end{equation*}%
with 
\begin{equation*}
\mathcal{N}_{1}(t,u,v)=u-(-\Delta +I)^{-1}(t\frac{u^{\alpha _{1}}}{v^{\beta
_{1}}}+(1-t)\lambda (u-\phi _{1})^{+}),
\end{equation*}%
and%
\begin{equation*}
\mathcal{N}_{2}(t,u,v)=v-(-\Delta +I)^{-1}(t\frac{u^{\alpha _{2}}}{v^{\beta
_{2}}}+(1-t)\lambda (v-\phi _{1})^{+}),
\end{equation*}

If $L_{2}>0$ is sufficiently large, then the Leray-Schauder topological
degree $\deg (\mathcal{N}(t,\cdot ,\cdot ),\mathcal{B}_{L_{2}}(0),0)$ is
well defined for every $t\in \lbrack 0,1],$ and it holds%
\begin{equation}
\deg \left( \mathcal{N}(1,\cdot ,\cdot ),\mathcal{B}_{L_{2}}(0),0\right) =1.
\label{36}
\end{equation}
\end{proposition}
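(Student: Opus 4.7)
The plan proceeds in three steps: (i) check that $\mathcal{N}$ defines an admissible compact homotopy on $\overline{\mathcal{B}}_{L_2}(0)$; (ii) invoke homotopy invariance to reduce the computation to the degree at $t=0$; (iii) evaluate this residual degree via a further homotopy to the identity. For step (i), the maps $\mathcal{N}_1,\mathcal{N}_2$ are compact perturbations of the identity on $C^1(\overline{\Omega})\times C^1(\overline{\Omega})$ because $(-\Delta+I)^{-1}:C(\overline{\Omega})\to C^1(\overline{\Omega})$ is compact. The singularity $u^{\alpha_i}/v^{\beta_i}$ at any zero of $\mathcal{N}(t,\cdot,\cdot)$ is tamed by Vazquez's strong maximum principle exactly as in the derivation of (\ref{5}): such a zero satisfies $u,v>\hat{\rho}$ on $\overline{\Omega}$ for a uniform $\hat{\rho}>0$. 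Choosing $L_2$ larger than the a priori bound provided by Lemma \ref{L2} then ensures that no zero of $\mathcal{N}(t,\cdot,\cdot)$ meets $\partial \mathcal{B}_{L_2}(0)$ for any $t\in[0,1]$, so the Leray-Schauder degree is well defined throughout the homotopy.

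Step (ii) is immediate: homotopy invariance yields
\[\deg(\mathcal{N}(1,\cdot,\cdot),\mathcal{B}_{L_2}(0),0)=\deg(\mathcal{N}(0,\cdot,\cdot),\mathcal{B}_{L_2}(0),0),\]
and at $t=0$ the system is decoupled and free of singularities. For step (iii), I would introduce the auxiliary homotopy
\[\mathcal{K}(s,u,v)=\bigl(u-s(-\Delta+I)^{-1}(\lambda(u-\phi_1)^+),\ v-s(-\Delta+I)^{-1}(\lambda(v-\phi_1)^+)\bigr),\quad s\in[0,1],\]
so that $\mathcal{K}(1,\cdot,\cdot)=\mathcal{N}(0,\cdot,\cdot)$ and $\mathcal{K}(0,\cdot,\cdot)$ is the identity. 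Any zero $(u,v)$ of $\mathcal{K}(s,\cdot,\cdot)$ solves the decoupled Neumann problem $-\Delta u+u=s\lambda(u-\phi_1)^+$ (and likewise for $v$). Testing with $u^+\in H^1(\Omega)$, using the pointwise inequality $(u-\phi_1)^+\leq u^+$ together with the variational characterization (\ref{71}) of $\lambda_1$, one arrives at $(\lambda_1-s\lambda)\int_\Omega (u^+)^2\,\mathrm{d}x\leq 0$, forcing $u^+\equiv 0$; the residual equation $-\Delta u+u=0$ with Neumann boundary conditions then delivers $u\equiv 0$, and analogously $v\equiv 0$. Hence $(0,0)$ is the only zero of $\mathcal{K}(s,\cdot,\cdot)$ for every $s\in[0,1]$, and it lies in the interior of $\mathcal{B}_{L_2}(0)$. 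Homotopy invariance then yields
\[\deg(\mathcal{N}(0,\cdot,\cdot),\mathcal{B}_{L_2}(0),0)=\deg(I,\mathcal{B}_{L_2}(0),0)=1,\]
which combined with step (ii) establishes (\ref{36}).

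The main obstacle is step (iii): one must guarantee that the auxiliary homotopy $\mathcal{K}$ has no zero on $\partial\mathcal{B}_{L_2}(0)$ for any $s\in[0,1]$, i.e., that the Neumann problem $-\Delta u+u=s\lambda(u-\phi_1)^+$ admits only the trivial solution. The strict inequality $\lambda<\lambda_1$ is exactly what makes this reduction go through via (\ref{71}), so the particular choice of $(u-\phi_1)^+$ (rather than, say, $|u^+-\phi_1|$ as in the homotopy $\mathcal{H}$ of Proposition \ref{P1}) is essential rather than cosmetic: it produces a positive-part test function argument that is clean enough to preserve the spectral gap across the whole deformation.
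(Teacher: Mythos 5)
Your proposal is correct and follows the paper's strategy for the main reduction: admissibility of $\mathcal{N}$ via compactness of $(-\Delta+I)^{-1}$, the strong-maximum-principle bound $u,v>\hat{\rho}$ at zeros to tame the singular terms, the a priori bound of Lemma \ref{L2} to keep zeros off $\partial\mathcal{B}_{L_{2}}(0)$, and homotopy invariance to pass from $t=1$ to $t=0$. Where you genuinely go beyond the paper is the evaluation of $\deg(\mathcal{N}(0,\cdot,\cdot),\mathcal{B}_{L_{2}}(0),0)$: the paper simply asserts this degree equals $1$, leaning on Lemma \ref{L2} (uniqueness of the trivial solution of $(\mathrm{\tilde{P}}_{0})$) without spelling out the normalization step, whereas you supply it explicitly through the auxiliary homotopy $\mathcal{K}(s,u,v)$ to the identity, together with the $u^{+}$ test-function argument using $(u-\phi_{1})^{+}\leq u^{+}$ and the Rayleigh characterization (\ref{71}); this argument is correct (it in fact re-proves, uniformly in $s\in[0,1]$, the uniqueness statement of Lemma \ref{L2}) and makes the computation self-contained. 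An equivalent shortcut, closer to what the paper implicitly relies on, is to note that since $\phi_{1}\geq\underline{\mu}>0$ the map $\mathcal{N}(0,\cdot,\cdot)$ coincides with the identity on a small ball around the origin, so excision plus normalization gives the value $1$ directly; your $\mathcal{K}$-homotopy buys a cleaner, fully written-out justification at the cost of one extra deformation. One caveat you inherit from the paper rather than introduce: the claim that every zero of $\mathcal{N}(t,\cdot,\cdot)$ satisfies $u,v>\hat{\rho}$ cannot hold uniformly down to $t=0$ (where $(0,0)$ is the zero), and well-definedness of $\mathcal{N}$ on all of $\overline{\mathcal{B}}_{L_{2}}(0)$, not just at its zeros, is glossed over in both texts; your final remark that the choice $(u-\phi_{1})^{+}$ versus $|u^{+}-\phi_{1}|$ is essential (degree $1$ versus $0$) is accurate and consistent with Lemma \ref{L9} and Proposition \ref{P1}.
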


\begin{proof}
By a similar argument to the one showing (\ref{5}), we infer that all
solutions $(u,v)\in C^{1}(\overline{\Omega })\times C^{1}(\overline{\Omega }%
) $ of $(\mathrm{\tilde{P}}_{t})$ satisfy $u,v>\hat{\rho}$ on $\overline{%
\Omega }$, for certain constant $\hat{\rho}>0,$ for all $t\in \lbrack 0,1]$.
Hence, due to the continuity of the right hand-side of the equations in $(%
\mathrm{P}_{t})$ for almost everywhere in $\Omega $, the homotopies $%
\mathcal{N}_{1}$,$\mathcal{N}_{2}$ are well defined. Moreover, from the
compactness of the operators $(-\Delta +I)^{-1}:C(\overline{\Omega }%
)\rightarrow C^{1}(\overline{\Omega })$ we derive that $\mathcal{N}_{1},%
\mathcal{N}_{2}:\left[ 0,1\right] \times C^{1}(\overline{\Omega })\times
C^{1}(\overline{\Omega })\rightarrow C(\overline{\Omega })$ are completely
continuous.

By the definition of $\mathcal{N}$, we infer that $(u,v)\in \mathcal{B}%
_{L_{2}}(0)$ is a solution of system $(\mathrm{P})$ if, and only if, 
\begin{equation*}
\begin{array}{c}
(u,v)\in \mathcal{B}_{L_{2}}(0)\,\,\,\text{and}\,\,\,\mathcal{N}(1,u,v)=0.%
\end{array}%
\end{equation*}%
In view of Lemma \ref{L2} solutions of $(\mathrm{\tilde{P}}_{t})$ lie in $%
\mathcal{B}_{L_{2}}(0)$ and%
\begin{equation*}
\deg \left( \mathcal{N}(0,\cdot ,\cdot ),\mathcal{B}_{L_{2}}(0),0\right) =1.
\end{equation*}%
Consequently, the homotopy invariance property leads to (\ref{36}).
\end{proof}

\subsubsection{\textbf{The degree on }$\mathcal{B}_{L_{1}}(0)\backslash 
\overline{\mathcal{B}}_{L_{2}}(0)$\textbf{.}}

\begin{proposition}
\label{P6}Assume that (\ref{alphabeta}) is satisfied. Let the map $\mathcal{M%
}:C^{1}(\overline{\Omega })\times C^{1}(\overline{\Omega })\rightarrow C(%
\overline{\Omega })$ be given by the following:%
\begin{equation*}
\mathcal{M}(u,v)=\left( u-(-\Delta +I)^{-1}(\frac{u^{\alpha _{1}}}{v^{\beta
_{1}}}),v-(-\Delta +I)^{-1}(\frac{u^{\alpha _{2}}}{v^{\beta _{2}}})\right) .
\end{equation*}%
For constants $L_{1},L_{2}>0$ with $L_{1}>L_{2}$, if the Leray-Schauder
topological degree $\deg (\mathcal{M}(\cdot ,\cdot ),\mathcal{B}%
_{L_{i}}(0),0)$ is well defined for $i=1,2$, then 
\begin{equation}
\deg (\mathcal{M}(\cdot ,\cdot ),\mathcal{B}_{L_{1}}(0)\backslash \overline{%
\mathcal{B}}_{L_{2}}(0),0)\neq 0.  \label{32}
\end{equation}
\end{proposition}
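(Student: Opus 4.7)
The plan is to use the additivity (excision) property of the Leray-Schauder degree, combined with the two degree computations already established in Propositions \ref{P1} and \ref{P2}. The crucial observation is that at $t=1$ the two homotopies $\mathcal{H}$ and $\mathcal{N}$ collapse to one and the same operator, namely
\begin{equation*}
\mathcal{H}(1,u,v)=\mathcal{N}(1,u,v)=\mathcal{M}(u,v),
\end{equation*}
because the terms $(1-t)\lambda|u^{+}-\phi_{1}|$ and $(1-t)\lambda(u-\phi_{1})^{+}$ (and likewise for $v$) vanish at $t=1$, leaving only the pure Gierer-Meinhardt right-hand side.

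Hence, from Proposition \ref{P1} we obtain $\deg(\mathcal{M},\mathcal{B}_{L_{1}}(0),0)=0$ and from Proposition \ref{P2} we obtain $\deg(\mathcal{M},\mathcal{B}_{L_{2}}(0),0)=1$. Since by assumption both degrees are well-defined, in particular $\mathcal{M}$ has no zero on $\partial \mathcal{B}_{L_{2}}(0)$, so the set $\mathcal{B}_{L_{1}}(0)\setminus \overline{\mathcal{B}}_{L_{2}}(0)$ is an open subset of $\mathcal{B}_{L_{1}}(0)$ whose boundary meets neither the zero set of $\mathcal{M}$ on $\partial \mathcal{B}_{L_{1}}(0)$ nor on $\partial \mathcal{B}_{L_{2}}(0)$. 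The additivity of the Leray-Schauder degree with respect to the decomposition $\mathcal{B}_{L_{1}}(0)=\mathcal{B}_{L_{2}}(0)\cup (\mathcal{B}_{L_{1}}(0)\setminus \overline{\mathcal{B}}_{L_{2}}(0))$ yields
\begin{equation*}
\deg(\mathcal{M},\mathcal{B}_{L_{1}}(0),0)=\deg(\mathcal{M},\mathcal{B}_{L_{2}}(0),0)+\deg(\mathcal{M},\mathcal{B}_{L_{1}}(0)\setminus \overline{\mathcal{B}}_{L_{2}}(0),0).
\end{equation*}
Substituting the two known values gives $0=1+\deg(\mathcal{M},\mathcal{B}_{L_{1}}(0)\setminus \overline{\mathcal{B}}_{L_{2}}(0),0)$, whence $\deg(\mathcal{M},\mathcal{B}_{L_{1}}(0)\setminus \overline{\mathcal{B}}_{L_{2}}(0),0)=-1\neq 0$, which establishes \eqref{32}.

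The main (mild) obstacle is to verify cleanly that the additivity formula is indeed applicable, i.e.\ that $0\notin \mathcal{M}(\partial \mathcal{B}_{L_{1}}(0)\cup \partial \mathcal{B}_{L_{2}}(0))$. This is however already encoded in the hypothesis that both $\deg(\mathcal{M},\mathcal{B}_{L_{i}}(0),0)$ are well defined, and it is explicitly guaranteed by Lemmas \ref{L1} and \ref{L2} (applied at $t=1$) which locate every solution of $(\mathrm{P})$ strictly inside $\mathcal{B}_{L_{1}}(0)$ and keep no solutions on $\partial \mathcal{B}_{L_{2}}(0)$ as soon as $L_{2}$ is chosen larger than the $C^{1}$-bound provided by \eqref{28}. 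No further estimates are needed; the proof is thus a direct consequence of the two preceding propositions and the standard additivity property of the Leray-Schauder degree.
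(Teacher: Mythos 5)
Your proposal is correct and follows essentially the same route as the paper: identify $\mathcal{M}$ with $\mathcal{H}(1,\cdot,\cdot)=\mathcal{N}(1,\cdot,\cdot)$, take the degree values $0$ and $1$ from Propositions \ref{P1} and \ref{P2}, and apply excision/domain additivity to get degree $-1\neq 0$ on $\mathcal{B}_{L_{1}}(0)\backslash \overline{\mathcal{B}}_{L_{2}}(0)$. The paper's proof does exactly this (phrasing the additivity step via excision of $\partial \mathcal{B}_{L_{2}}(0)$ first), including your observation that well-definedness of the two degrees guarantees $\mathcal{M}$ does not vanish on the two boundary spheres.
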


\begin{proof}
Fix $L_{2}>0$ such that the conclusion of Proposition \ref{P2} be satisfied
and choose $L_{1}>L_{2}$ so large to fulfil the conclusion of Proposition %
\ref{P1}. The weak comparison principle (\cite[Lemma 3.2]{ST}) applied to
problems $(\mathrm{P}_{t})$ and $(\mathrm{\tilde{P}}_{t})$ by making use of
the inequality $(s-\phi _{1})^{+}\leq |s^{+}-\phi _{1}|,$ for all $s\in 
\mathbb{R}
$, implies that the inclusion $\overline{\mathcal{B}}_{L_{2}}(0)\subset 
\mathcal{B}_{L_{1}}(0)$ holds. In view of the expressions of $\mathcal{M}$
and the homotopies $\mathcal{H}$, $\mathcal{N}$, it is seen that%
\begin{equation*}
\mathcal{H}(1,\cdot ,\cdot )=\mathcal{N}(1,\cdot ,\cdot )\,\,\,\text{in}%
\,\,\,\overline{\mathcal{B}}_{L_{2}}(0)
\end{equation*}%
and%
\begin{equation}
\mathcal{M}(\cdot ,\cdot )=\mathcal{H}(1,\cdot ,\cdot )\,\,\,\text{in}\,\,\,%
\mathcal{B}_{L_{1}}(0).  \label{31}
\end{equation}%
Moreover, since $\mathcal{H}(1,\cdot ,\cdot )$ and $\mathcal{N}(1,\cdot
,\cdot )$ do not vanish on $\partial \mathcal{B}_{L_{1}}(0)$ and $\partial 
\mathcal{B}_{L_{2}}(0)$, respectively, the Leray-Schauder topological degree
of $\mathcal{H}(1,\cdot ,\cdot )$ on $\mathcal{B}_{L_{1}}\backslash 
\overline{\mathcal{B}}_{L_{2}}$ makes sens. By the excision property of the
degree, we get%
\begin{equation*}
\begin{array}{l}
\deg (\mathcal{H}(1,\cdot ,\cdot ),\mathcal{B}_{L_{1}}(0),0)=\deg (\mathcal{H%
}(1,\cdot ,\cdot ),\mathcal{B}_{L_{1}}(0)\backslash \partial \mathcal{B}%
_{L_{2}}(0),0).%
\end{array}%
\end{equation*}%
By virtue of the domain additivity property of the degree it follows that 
\begin{equation*}
\begin{array}{c}
\deg (\mathcal{H}(1,\cdot ,\cdot ),\mathcal{B}_{L_{1}}(0)\backslash 
\overline{\mathcal{B}}_{L_{2}}(0),0)+\deg (\mathcal{H}(1,\cdot ,\cdot ),%
\mathcal{B}_{L_{2}}(0),0) \\ 
=\deg (\mathcal{H}(1,\cdot ,\cdot ),\mathcal{B}_{L_{1}}(0),0).%
\end{array}%
\end{equation*}%
Hence by (\ref{35}), (\ref{36}) and bearing in mind (\ref{31}), we deduce
that%
\begin{equation*}
\begin{array}{l}
\deg (\mathcal{M}(\cdot ,\cdot ),\mathcal{B}_{L_{1}}(0)\backslash \overline{%
\mathcal{B}}_{L_{2}}(0),0) \\ 
=\deg (\mathcal{H}(1,\cdot ,\cdot ),\mathcal{B}_{L_{1}}(0)\backslash 
\overline{\mathcal{B}}_{L_{2}}(0),0)=-1.%
\end{array}%
\end{equation*}%
This ends the proof.
\end{proof}

\subsection{Proof of the main result (Existence of a third solution)}

\begin{proof}[Proof of Theorem \protect\ref{T5}]
Two distinct positive solutions $(u_{1},v_{1})\in \lbrack C^{-1}\phi
_{1},Cz]\times \lbrack C^{-1}\phi _{1},Cz]$ and $(u_{2},v_{2})\in \lbrack
C^{-1}y_{\delta },Cy]\times \lbrack C^{-1}y_{\delta },Cy],$ belonging to $%
C^{1,\tau }(\overline{\Omega })\times C^{1,\tau }(\overline{\Omega })$ for
some $\tau \in (0,1),$ are obtained for system $(\mathrm{P})$ by Theorems %
\ref{T1} and \ref{T3}. The proof of Theorem \ref{T5} is completed by showing
that $(\mathrm{P})$ admits a positive solution $(u_{3},v_{3})\in C^{1,\tau }(%
\overline{\Omega })\times C^{1,\tau }(\overline{\Omega }),$ for certain $%
\tau \in (0,1)$, such that 
\begin{equation}
u_{3}\neq u_{i}\text{ \ and \ }v_{3}\neq v_{i}\text{, }i=1,2.  \label{7}
\end{equation}%
Fix 
\begin{equation*}
L_{2}>C\max \{\left\Vert z\right\Vert _{C^{1,\tau }(\overline{\Omega }%
)},\left\Vert y\right\Vert _{C^{1,\tau }(\overline{\Omega })}\}
\end{equation*}%
such that the conclusion of Proposition \ref{P2} be satisfied. As a result,
both positive solutions $(u_{1},v_{1})$ and $(u_{2},v_{2})$ lie in $\mathcal{%
B}_{L_{2}}(0)$. Now, we take $L_{1}>L_{2}$ such that the conclusion of
Proposition \ref{P1} be fulfilled. Here, without any loss of generality, we
may assume that the constant $L_{1}$ is sufficiently large so that the ball $%
\mathcal{B}_{L_{1}}(0)$ contains all $C^{1}$-bound solutions of $(\mathrm{P}%
) $. Otherwise, there would be an infinite number of solutions with $C^{1}$%
-regularity and the proof of Theorem \ref{T5} is therefore completed.

Thus, in view of Proposition \ref{P6} there exists $(u_{3},v_{3})\in C^{1}(%
\overline{\Omega })\times C^{1}(\overline{\Omega })$ satisfying $\mathcal{M}%
(u_{3},v_{3})=0$. This implies that the pair $(u_{3},v_{3})$ is a solution
of $(\mathrm{P})$. Since $(u_{3},v_{3})\in \mathcal{B}_{L_{1}}(0)\backslash 
\overline{\mathcal{B}}_{L_{2}}(0)$ and the ordered rectangles $[C^{-1}\phi
_{1},Cz]\times \lbrack C^{-1}\phi _{1},Cz]$ and $[C^{-1}y_{\delta
},Cy]\times \lbrack C^{-1}y_{\delta },Cy]$ are contained in the ball $%
\mathcal{B}_{L_{2}}(0)$, the assertion (\ref{7}) holds true and therefore $%
(u_{3},v_{3})$ is a third nontrivial positive solution of $(\mathrm{P})$.
The regularity theory (see \cite[Theorem 1.2]{L}) ensures that $%
(u_{3},v_{3})\in C^{1,\tau }(\overline{\Omega })\times C^{1,\tau }(\overline{%
\Omega })$ for some $\tau \in (0,1)$.
\end{proof}


\begin{thebibliography}{99}
\bibitem{B} H. Br\'{e}zis, \emph{Analyse fonctionnelle theorie et
applications}, Masson, Paris, 1983.

\bibitem{CF} E. Casas and L.A. Fernandez, \emph{A Green's formula for
quasilinear elliptic operators}, J. Math. Anal. Appl. \textbf{142} (1989),
62--73.

\bibitem{CM} Y. S. Choi and P. J. McKenna, A singular Gierer-Meinhardt
system of elliptic equations: the classical case, \textit{Nonlinear Anal.} 
\textbf{55 }(2003), 521-541.

\bibitem{DKC} M. Del Pino, M. Kowalczyk and X. Chen, The Gierer-Meinhardt
system: the breaking of homoclinics and multi-bump ground states, \textit{%
Commun. Contemp. Math. }\textbf{3} (2001), 419-439.

\bibitem{DKW} M. Del Pino, M. Kowalczyk and J. Wei, Multi-bump ground states
of the Gierer-Meinhardt system in $%
\mathbb{R}
^{2}$, \textit{Ann. Inst. H. Poincar\'{e}, Anal. Non Lin\'{e}aire} \textbf{20%
} (2003), 53-85.

\bibitem{DM} H. Dellouche and A. Moussaoui, \emph{Singular quasilinear
elliptic systems with gradient dependence}, Positivity 26 (2022),
doi:10.1007/s11117-022-00868-3.

\bibitem{DM1} H. Didi \& A. Moussaoui, \emph{Multiple positive solutions for
a class of quasilinear singular elliptic systems}, Rend. Circ. Mat. Palermo,
II. Ser 69 (2020), 977-994.

\bibitem{DM2} H. Didi, B. Khodja \& A. Moussaoui, \emph{Singular Quasilinear
Elliptic Systems With (super-) Homogeneous Condition}, J. Sibe. Fede. Univ.
Math. Phys. 13(2 ) (2020), 1-9.

\bibitem{GR} M. Ghergu and V. Radulescu, On a class of Gierer-Meinhardt
systems arising in morphogenesis, \textit{C. R.Acad.Sci. Paris}, \textit{%
Ser.I} \textbf{344 }(2007).

\bibitem{GG} C. Ghoussoub and C. Gui, Multi-peak solutions for a semilinear
Neumann problem involving the critical Sobolev exponent, Math. Z. 229
(1998), 443-474.

\bibitem{GMsyst} A. Gierer and H. Meinhardt, A theory of biological pattern
formation, Kybernetik 12 (1972), 30--39.

\bibitem{GWW} C. Gui, J. Wei and M. Winter, Multiple boundary peak solutions
for some singularly perturbed Neumann problems, Ann. Inst. H. Poincar\'{e}
Anal. Non Lin%
\'{}%
eaire 17 (2000), 47-82.

\bibitem{Hai} D. D. Hai, \emph{On a class of singular p-Laplacian boundary
value problems}, J. Math. Anal. Appl. 383 (2011), 619-626.

\bibitem{K} J.P. Keener, Activators and inhibitors in pattern formation,
Stud. Appl. Math. 59 (1978), 1--23.

\bibitem{KM} B. Khodja \& A. Moussaoui, \emph{Positive solutions for
infinite semipositone/positone quasilinear elliptic systems with singular
and superlinear terms}, Diff. Eqts. App. 8(4) (2016), 535-546.

\bibitem{Kim} E.H. Kim, \emph{Singular Gierer-Meinhardt systems of elliptic
boundary value problems}, J. Math. Anal. Appl. (1) 308 (2005), 1--10.

\bibitem{KR} T. Kolokolonikov and X. Ren, Smoke-ring solutions of
Gierer-Meinhardt System in R3 , SIAM Journal on Applied Dynamical Systems,
10(1):251-277, 2011.

\bibitem{KWY} T. Kolokolonikov, J. Wei and W. Yang, On large ring solutions
for Gierer--Meinhardt system in $%
\mathbb{R}
^{3}$, J. Diff. Eqts.\ 255 (2013), 1408-1436.

\bibitem{L} G. M. Lieberman, \emph{\ Boundary regularity for solutions of
degenerate elliptic equations}, Nonl. Anal. 12 (1988), 1203-1219.

\bibitem{Mein} H. Meinhardt, Models of biological pattern formation, Acad.
Press, London, 1982.

\bibitem{MMP} D. Motreanu, V.V. Motreanu \& N. Papageorgiou, \emph{%
Topological and variational methods with applications to nonlinear boundary
value problems}. Springer, New York, 2014.

\bibitem{MM3} D. Motreanu \& A. Moussaoui, \emph{An existence result for a
class of quasilinear singular competitive elliptic systems}, Appl. Math.
Lett.\emph{\ }38 (2014), 33-37.

\bibitem{MM2} D. Motreanu \& A. Moussaoui, $\emph{A}$ $\emph{quasilinear}$ $%
\emph{singular}$ $\emph{elliptic}$ $\emph{system}$ $\emph{without}$ $\emph{%
cooperative}$ $\emph{structure}$, Acta Math. Sci.\emph{\ }34 (B) (2014),
905-916.

\bibitem{MM1} D. Motreanu \& A. Moussaoui, \emph{Existence and boundedness
of solutions for a singular cooperative quasilinear elliptic system},
Complex Var. Elliptic Equ. 59 (2014), 285-296.

\bibitem{MKT} A.\ Moussaoui, B.\ Khodja and S. Tas, A singular Gierer
Meinhardt system of elliptic equations in $%
\mathbb{R}
^{N},$ Nonlinear Analysis 71 (2009), 708-716.

\bibitem{MP} D. Mugnai and N. S. Papageorgiou. Resonant nonlinear Neumann
problems with indefinite weight. Ann. Scuola Norm. Super. Pisa Cl. Sci., 11
(2012), 729-788.

\bibitem{Ni} W. M. Ni, Diffusion, cross-diffusion, and their spike-layer
steady states, Notices of Amer. Math. Soc. 45 (1998), 9-18.

\bibitem{Ni-Takagi} W. M. Ni and I. Takagi, On the shape of least-energy
solutions to a semilinear Neumann problem, Comm. Pure Appl. Math. 44 (1991),
819--851.

\bibitem{Ni-Takagi2} W. M. Ni and I. Takagi, Locating the peaks of
least-energy solutions to a semilinear Neumann problem, Duke Math. J. 70
(1993), 247--281.

\bibitem{NI-Takagi-Yanagida} W. M. Ni, I. Takagi and E. Yanagida, Stability
of least energy patterns of the shadow system for an activator-inhibitor
model, Japan J.Industrial Appl. Math. 18(2)(2001), 259-272.

\bibitem{ST} K. Sreenadh and S. Tiwari, Global multiplicity results for $%
p(x) $-Laplacian equation with nonlinear Neumann boundary condition, Diff.
Integral Eqts. (7-8) 26 (2013), 815-836.

\bibitem{V} J.L. V\'{a}zquez, \emph{A strong maximum principle for some
quasilinear elliptic equations}, Appl. Math. Optimization \textbf{12}
(1984), 191--202.

\bibitem{Wei} J. Wei, Existence and stability of spikes for the
Gierer-Meinhardt system, Handbook of Differential Equations-stationary
partial differential equations, Vol. 5, Elsevier, 489-581.

\bibitem{W1} J. Wei, On the boundary spike layer solutions of a singularly
perturbed semilinear Neumann problem, J. Diff. Eqns 134 (1997), 104-133.

\bibitem{WW} J. Wei and M. Winter, On the two-dimensional Gierer-Meinhardt
system with strong coupling, SIAM J.Math.Anal. 30 (1999), 1241-1263.

\bibitem{WW1} J. Wei and M. Winter, Spikes for the Gierer-Meinhardt system
in two dimensions: the strong coupling case, J. Diff. Eqts \textbf{178}
(2004), 478-518.

\bibitem{WW2} J. Wei and M. Winter, Existence and stability analysis of
asymmetric for the Gierer-Meinhardt system, \textit{J. Math. Pures Appl.} 
\textbf{83} (2004), 433-476.

\bibitem{Wei2} J. Wei, On single interior spike solutions of
Gierer-Meinhardt system: uniqueness and spectrum estimates, Europ. J. Appl.
Math. 10 (1999), 353-378.

\bibitem{WW3} J. Wei and M. Winter, Spikes for the two-dimensional
Gierer-Meinhardt system: the weak coupling case, J. Nonlinear Science
6(2001), 415-458.

\bibitem{Z} E. Zeidler, Nonlinear functional analysis and its applications.
I. Fixed-point theorems. Springer-Verlag New York, 1986.
\end{thebibliography}
\end{document}